\theoremstyle{plain}
\newtheorem{lem}{Lemma}[section]
\newtheorem{lemma}[lem]{Lemma}
\newtheorem{theorem}[lem]{Theorem}
\newtheorem{proposition}[lem]{Proposition}
\newtheorem{conj}[lem]{Conjecture}
\theoremstyle{definition}
\newtheorem{definition}[lem]{Definition}
\newtheorem{example}[lem]{Example}
\newtheorem{remark}[lem]{Remark}
\newtheorem{notation}[lem]{Notation}
\newtheorem{rationale}[lem]{Rationale}
\numberwithin{equation}{section}
\newcommand{\mathfont}{\mathbf}
\newcommand{\Z}{\mathfont Z}
\newcommand{\ZZ}{\mathfont Z}
\newcommand{\F}{\mathfont F}
\newcommand{\FF}{\mathfont F}
\newcommand{\CCC}{C}
\newcommand{\TAU}{\tau}
\newcommand{\cO}{\mathcal{O}}
\newcommand{\cM}{\mathcal{M}}
\newcommand{\cA}{\mathcal{A}}
\newcommand{\cR}{\mathcal{R}}
\newcommand{\into}{\hookrightarrow}
\DeclareMathOperator{\ord}{ord}
\newcommand{\PP}{\mathfont{P}}
\DeclareMathOperator{\Jac}{Jac}
\newcommand{\floor}[1]{\left \lfloor #1 \right \rfloor}
\title{Producing supersingular curves of genus five}
\author{Jeremy Booher}
\address{University of Florida}
\email{jeremybooher@ufl.edu}
\author{Rachel Pries}
\address{Colorado State University}
\email{pries@colostate.edu}
\date{\today}
\thanks{Pries was supported by NSF grant DMS-22-00418.  We thank Jeff Achter, Dusan Dragutinovi\'c, Everett Howe, and Valentijn Karemaker 
for helpful conversations.}
\begin{document}

\begin{abstract}
For a prime $p$ congruent to three modulo four, 
we prove that there exists a smooth curve of genus five in characteristic $p$ that is supersingular. 
We produce this curve as an unramified double cover of a curve of genus three.  
We conjecture that the setting of unramified double covers of curves of genus three 
also produces supersingular curves of genus five when $p$ is congruent to one modulo four, 
and we computationally verify this conjecture for primes less than $100$.
These results can be viewed as a generalization of work of Ekedahl and of Harashita, Kudo, and Senda.

Keywords:
curve, Jacobian, abelian variety, Prym variety, positive characteristic, supersingular, Frobenius, Newton polygon, moduli space.

2020 MSC primary: 11G20, 11M38, 14H30, 14H40. 
Secondary 11G10, 11G18, 14H10.


\end{abstract}

\maketitle

\section{Introduction}

Suppose $k$ is an algebraically closed field of characteristic $p$ where $p$ is a prime number.  
Suppose $A$ is a principally polarized (p.p.) abelian variety of dimension $g$ defined over $k$.
Then $A$ is \emph{supersingular} if the only slope of its Newton polygon is $1/2$.
This is equivalent to $A$ being isogenous to a product of $g$ supersingular elliptic curves, 
by \cite[Theorem~2d]{tate:endo} and \cite[Theorem~4.2]{oortsub}.

Now suppose $X$ is a smooth (projective, irreducible) curve of genus $g$ over $k$. 
The curve $X$ is \emph{supersingular} if its Jacobian is supersingular. 
For each integer $g \geq 1$ and prime $p$, it is natural to ask whether there exists a smooth curve of 
genus $g$ defined over $\overline{\FF}_p$ that is supersingular.
The answer is known to be yes:
\begin{enumerate}
\item when $g=1$ for all $p$, by Deuring \cite{deuring};
\item when $g=2$ for all $p$, by Serre \cite[Th\'eor\`eme 3]{serre82}, also \cite[Proposition~3.1]{iko};
\item when $g=3$ for all $p$, by Oort \cite[Theorem~5.12]{oorthypsup}; also \cite[Theorem~1]{ibukiyama93};
\item and when $g=4$ for all $p$, by Harashita, Kudo, and Senda \cite[Corollary~1.2]{khs20}; 
for an alternative proof, see \cite[Theorem~1.1]{P:somecasesOort}.
\end{enumerate}

When $p=2$, the answer is yes for all $g$ by \cite{vdgvdv}; 
thus we restrict to the case that $p$ is odd.

In this paper, we propose a conjecture about the existence of supersingular curves of genus $5$ in a new setting, 
and we provide evidence for this conjecture.
We
suppose $X$ is a smooth curve of genus $3$ and $\pi: Y \to X$ is an unramified double cover, so $Y$ has genus $5$.
Then $\Jac(Y)$ is isogenous to $\Jac(X) \times P$, where the Prym $P$ of $\pi$ is a p.p.\ abelian surface.
So $Y$ is supersingular if and only if $X$ and $P$ are both supersingular.

\begin{conj} \label{conjecture}
For any odd prime $p$, 
there exists a smooth curve $X$ of genus $3$ over $\overline{\FF}_p$
having an unramified double cover $\pi : Y \to X$ such that
$Y$ is a supersingular curve of genus $5$.
\end{conj}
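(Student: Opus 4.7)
The plan is to construct, for each odd prime $p$, a supersingular smooth curve $X/\overline{\FF}_p$ of genus $3$ together with a class $\eta \in \Jac(X)[2] \setminus \{0\}$, giving an unramified double cover $\pi : Y \to X$ of genus $5$, such that the associated Prym surface $P$ is also supersingular; the decomposition $\Jac(Y) \sim \Jac(X) \times P$ mentioned above then forces $Y$ to be supersingular. The problem thus reduces to exhibiting, for each odd $p$, a supersingular genus-$3$ curve admitting an \'etale double cover with supersingular Prym.

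For $p \equiv 3 \pmod{4}$ I would take $X$ to be a curve with strong CM structure, for instance the Fermat quartic $x^4 + y^4 + z^4 = 0$. Its Jacobian is isogenous to a product of three elliptic curves with CM by $\Z[i]$, and since $p$ is inert in $\Z[i]$ exactly when $p \equiv 3 \pmod{4}$, the curve $X$ is supersingular. The remaining task is to exploit the large automorphism group of $X$ acting on the $63$ nonzero classes in $\Jac(X)[2]$: I would aim for an orbit whose corresponding cover $Y$ inherits enough automorphisms that $\Jac(Y)$, and hence $P$, splits up to isogeny into factors with CM by $\Z[i]$ (or by another imaginary quadratic order inert at $p$), thereby inheriting supersingularity. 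Verification can be done by computing the Cartier--Manin matrix of an explicit model of $Y$, or by identifying $P$ directly with a known product of CM elliptic curves.

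For $p \equiv 1 \pmod{4}$, the Fermat quartic is ordinary, and no analogous uniformly-defined curve with rich enough CM structure presents itself. The natural alternative is to take $X$ hyperelliptic (supersingular hyperelliptic genus-$3$ curves exist for all odd $p$ by Oort's theorem), to classify \'etale double covers of $X$ by partitions of the $8$ Weierstrass points into two subsets of even cardinality, and to use Mumford's construction to describe $P$ in terms of smaller hyperelliptic pieces; the problem then reduces to arranging simultaneous supersingularity of these pieces. The central obstacle is that no such construction is visibly uniform in $p$. A dimension count in $\cR_3$ nonetheless makes the conjecture plausible: the supersingular-Jacobian locus there has dimension $2$ (from Oort's bound $\lfloor g^2/4 \rfloor$ with $g=3$) and the preimage under the Prym map of $\cA_2^{ss}$ has dimension $4$, so their expected intersection inside the $6$-dimensional $\cR_3$ has dimension $0$ and should be non-empty. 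Upgrading this heuristic to a proof valid for \emph{every} $p \equiv 1 \pmod{4}$, rather than for generic or almost all such $p$, is what leaves the statement conjectural and at present accessible only by explicit computation for small primes.
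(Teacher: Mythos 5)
Your reduction of the problem (find a supersingular genus-$3$ curve $X$ with a $2$-torsion class whose Prym is supersingular) and your dimension count in $\cR_3$ match the logic of the paper, and you are right that the statement is genuinely conjectural for $p \equiv 1 \bmod 4$, where the paper itself only gives a computational verification for $p<100$. But for $p \equiv 3 \bmod 4$ --- the one case where a proof is actually available --- your proposal has a real gap at its central step. Starting from the Fermat quartic, whose supersingularity for $p\equiv 3\bmod 4$ is indeed classical, you still need to produce \emph{one} nonzero $\eta \in \Jac(X)[2]$ whose Prym surface is supersingular for \emph{every} such $p$, and this is asserted only as a hope (``I would aim for an orbit\ldots''). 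Knowing $\Jac(X)\sim E^3$ says nothing directly about the Prym, which lives inside $\Jac(Y)$; even if the automorphism group of the Fermat quartic forces $\Jac(Y)$ to split into CM elliptic factors, those factors could have CM by a field other than $\ZZ[i]$ (e.g.\ $\ZZ[\sqrt{-2}]$ or an order in $\mathbf{Q}(\zeta_8)$), in which case supersingularity would require congruence conditions on $p$ that are not implied by $p\equiv 3\bmod 4$. This is exactly the failure mode of the earlier Fermat-quotient constructions surveyed in the paper, which only cover a patchwork of congruence classes and miss, for instance, $p\equiv 43, 67 \bmod 120$. Falling back on a Cartier--Manin computation for an explicit model of $Y$ verifies individual primes only, not the whole residue class.

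The paper's actual argument for $p\equiv 3\bmod 4$ is structurally different and worth contrasting with your rigid-CM strategy: it works inside the two-parameter family $M[8]$ of $\ZZ/4\ZZ$-covers $y^4=x^2(x-1)^2(x-t_1)(x-t_2)$, where a Shimura-variety (Kottwitz) constraint shows that any drop of $p$-rank below the generic value $2$ already forces $X$ to be supersingular; the Prym of the natural \'etale double cover is computed explicitly (via Kani--Rosen) to be isogenous to $E_1\times E_2$ with $E_2$ automatically supersingular; and the two remaining conditions become explicit polynomial equations $b_p(t_1,t_2)=0$ and $H_p(t_2/t_1)=0$, which are shown to have a common solution with $t_1,t_2\neq 0,1$ and $t_1\neq t_2$ by restricting to the line $t_2=-t_1$ and examining the coefficient of $t^2$ in $b_p(t,-t)$. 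The freedom to move in a two-dimensional family, rather than pinning everything to a single rigid curve, is what lets the argument apply uniformly to all $p\equiv 3\bmod 4$; your proposal would need an analogous degree of freedom, or else a complete determination of the CM types of all $63$ Pryms of the Fermat quartic, before it could close the gap.
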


Here is the main idea behind this conjecture.
Within the moduli space $\cM_3$ of smooth curves of genus $3$, the supersingular locus has dimension $2$.  
Within the moduli space $\cA_2$ of p.p.\ abelian surfaces, the supersingular locus has codimension $2$.
Thus, we might expect to find finitely many supersingular curves $X$ of genus $3$ that 
have an unramified double cover $\pi:Y \to X$ whose Prym is also supersingular, and the 
difficulty is showing that not all of these occur at the boundary of $\cM_3$.

\subsection{Verification when \texorpdfstring{$p \equiv 3 \pmod{4}$}{p=3 mod 4}}
In Section~\ref{Section3}, we provide our main theoretical evidence 
by proving Conjecture~\ref{conjecture} when $p \equiv 3 \bmod 4$.

\begin{theorem} \label{Tintro}
For any prime $p \equiv 3 \bmod 4$, 
there exists a smooth curve $X$ of genus $3$ defined over $\overline{\FF}_p$
having an unramified double cover $\pi : Y \to X$ such that $Y$ is supersingular.
In particular, there exists a smooth curve $Y$ of genus five defined over 
$\overline{\FF}_p$ that is supersingular for any $p \equiv 3 \bmod 4$.
\end{theorem}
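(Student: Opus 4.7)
The strategy is dictated by the isogeny $\Jac(Y) \sim \Jac(X) \times P$ established in the introduction: since $Y$ is supersingular exactly when $\Jac(X)$ and the Prym $P$ are both supersingular, it suffices to exhibit a smooth genus $3$ curve $X$ together with an étale double cover $\pi : Y \to X$ for which both conditions hold. The hypothesis $p \equiv 3 \bmod 4$ will be used through the classical Deuring criterion: an elliptic curve with complex multiplication by $\ZZ[\sqrt{-1}]$ (equivalently, with $j$-invariant $1728$) is supersingular over $\overline{\FF}_p$ if and only if $p \equiv 3 \bmod 4$. The plan is to arrange for $\Jac(X)$ and $P$ to be isogenous to products of such CM elliptic curves.

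First I would restrict attention to genus $3$ curves $X$ equipped with a Klein four-group $V_4$ of automorphisms that makes $\Jac(X)$ split, up to isogeny, as a product of three elliptic curves $E_1 \times E_2 \times E_3$. A natural choice is a hyperelliptic curve of the form $y^2 = f(x^2)$ with $f$ of degree $4$, whose symmetry $(x,y) \mapsto (-x,y)$ together with the hyperelliptic involution generates such a $V_4$. For suitably chosen $f$, each quotient $E_i$ acquires an extra order-$4$ automorphism, forcing $j(E_i) = 1728$. Deuring's criterion then gives that $\Jac(X)$ is supersingular for every $p \equiv 3 \bmod 4$.

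Next, I would choose a nonzero $V_4$-invariant $2$-torsion point of $\Jac(X)$ and let $\pi: Y \to X$ be the associated unramified double cover. The $V_4$-action lifts to $Y$, and the Prym $P$, being a $V_4$-stable abelian subvariety of $\Jac(Y)$, decomposes up to isogeny as a product of two elliptic curves, each appearing as a Jacobian quotient attached to one of the nontrivial involutions of $V_4$ lifted to $Y$. By tuning the family so that these two Prym factors also inherit an extra automorphism of order $4$, I would force them to have $j$-invariant $1728$; by the same Deuring argument, $P$ is then supersingular when $p \equiv 3 \bmod 4$, and the full Jacobian of $Y$ is supersingular.

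The main obstacle will be ensuring that the resulting $Y$ is actually smooth and irreducible, since the dimension heuristic in the introduction only predicts finitely many candidates in the relevant moduli space and all of them could \emph{a priori} lie on the boundary of $\cM_3$ or correspond to disconnected covers. To rule this out, I would write down explicit equations for $X$ and for $\pi$, check that the discriminant of the defining polynomial is nonzero in characteristic $p$, and verify that the chosen $2$-torsion class is nontrivial (so that $Y$ is connected) by identifying the partition of Weierstrass points it induces. If the explicit parameter choice can be made uniformly in $p \equiv 3 \bmod 4$, the argument closes; otherwise one may need to exhibit separately a curve for each residue class of $p$ modulo a small integer.
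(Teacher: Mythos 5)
Your overall frame --- reduce to making $\Jac(X)$ and the Prym $P$ simultaneously supersingular, and use $p\equiv 3\bmod 4$ through elliptic curves with $j=1728$ --- matches the paper's starting point, and your instinct about the Prym is even realized there: in the paper's construction the Prym is isogenous to $E_1\times E_2$ with $E_2\cong (y^2=x^3-x)$ and, after the specialization $t_2=-t_1$, $E_1$ also has $j=1728$. But the proposal has a genuine gap at the central difficulty, namely making the genus $3$ curve $X$ itself supersingular. First, a concrete error: for the Klein four-group you describe on $y^2=f(x^2)$ (the extra involution $(x,y)\mapsto(-x,y)$ together with the hyperelliptic involution), Kani--Rosen gives $\Jac(X)\sim E\times \Jac(C)$ with $C$ of genus $2$, not a product of three elliptic curves; splitting further requires more automorphisms. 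Second, and more fundamentally, your plan to ``tune'' the family so that all three Jacobian factors and both Prym factors acquire an order-$4$ automorphism over-determines the system: each such condition is codimension one in a parameter space of dimension at most three, and you give no mechanism producing a simultaneous solution, let alone one uniform in $p$. Third, if such a configuration did exist, the Kani--Rosen isogenies coming from a $2$-group have degree a power of $2$, hence prime to $p$, so $\Jac(Y)$ would have $a$-number $5$, i.e.\ $Y$ would be superspecial; by Ekedahl's bound $g\le p(p-1)/2$ for superspecial curves, no superspecial curve of genus $5$ exists in characteristic $3$, so your approach provably fails for $p=3$.

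The idea you are missing is how the paper converts supersingularity of the genus $3$ curve into a \emph{single} polynomial condition rather than a stack of CM conditions. The paper works in the two-parameter family $M[8]$ of $\ZZ/4\ZZ$-covers $y^4=x^2(x-1)^2(x-t_1)(x-t_2)$, which is a special (Shimura-type) family: for $p\equiv 3\bmod 4$ the Kottwitz constraints allow only two Newton polygons, the $\mu$-ordinary one of $p$-rank $2$ and the supersingular one. Hence a drop in $p$-rank, detected by the vanishing of one explicit polynomial $b_p(t_1,t_2)$ built from a Frobenius eigenspace computation \`a la Tamagawa, already forces $X$ to be supersingular. Supersingularity of the Prym contributes one more equation $H_p(t_2/t_1)=0$, so one has two equations in two unknowns; the paper then specializes to $t_2=-t_1$ (killing the Prym condition since $-1$ is a root of $H_p$) and shows by computing the coefficient of $t^2$ in $b_p(t,-t)$ that this one-variable polynomial has a root other than $0,\pm 1$, which guarantees $X$ and $Y$ are smooth. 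Without some substitute for this dichotomy argument, your proposal does not produce the required curve.
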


The proof of Theorem~\ref{Tintro} shares two features with the result of \cite{khs20} 
(about the existence of supersingular curves of genus $4$ for all primes $p$): both construct supersingular curves as Klein-four covers of the projective line $\mathfont{P}^1$ and both are, in principal, constructive.  However, the techniques are very different.  

Our proof uses a new strategy involving special Shimura varieties.
We study a two-dimensional family $M[8]$ of curves $X$ of genus $3$.
This is the eighth family of cyclic covers of $\mathfont{P}^1$ considered by Moonen \cite{moonen};
the image of the $M[8]$ family under the Torelli morphism is a special subvariety of the moduli space $\cA_3$ of p.p.\ abelian 
threefolds.  
By taking an unramified degree two cover $\pi: Y \to X$, we obtain a two-dimensional family of curves of genus $5$, 
where the Prym $P$ of $\pi$ is a p.p.\ abelian surface.
The $M[8]$ family has an intriguing property: when $p \equiv 3 \bmod 4$, 
the generic curve $X$ has $p$-rank $2$ and the generic Prym $P$ has $p$-rank $1$.  

Using a result of Tamagawa \cite{tamagawa03},
we determine a polynomial condition on $M[8]$ for which the $p$-rank of $X$ drops and another for 
which the $p$-rank of $P$ drops.  By default, when both polynomial conditions are satisfied, 
we would expect that $Y$ has $p$-rank $1$;
instead, the Newton polygons of $X$ and $P$ are actually supersingular, 
and so $Y$ is in fact supersingular!  
To complete the proof, we show that both polynomial conditions are satisfied at a point in the family where $X$ and $Y$ are both smooth.  
This is an important step in the proof, and not a formality: the closure of the $M[8]$ family intersects the boundary of $\mathcal{M}_3$, 
so some curves in the complete family are singular.

\subsection{Computational Evidence}
In Section~\ref{sec:computational}, we provide computational evidence for Conjecture~\ref{conjecture} in the open case 
when $p \equiv 1 \bmod{4}$.

\begin{theorem} \label{Tcomputation}
Conjecture~\ref{conjecture} is also true for all primes $p \equiv 1 \bmod{4}$ which are less than $100$.
\end{theorem}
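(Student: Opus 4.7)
The plan is to verify Conjecture~\ref{conjecture} by explicit computer-assisted construction for each of the eleven primes $p \equiv 1 \bmod 4$ with $p < 100$. For each such $p$, the goal is to exhibit a concrete smooth genus $3$ curve $X$ over $\overline{\FF}_p$ and an unramified double cover $\pi : Y \to X$ such that $Y$ is supersingular, and then to verify supersingularity rigorously.

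I would continue to work inside the two-dimensional family $M[8]$ used in the proof of Theorem~\ref{Tintro}, even though the $p$-rank drop phenomenon exploited there is special to $p \equiv 3 \bmod 4$. On this family, the Prym $P$ varies in a $2$-dimensional locus of $\cA_2$ whose supersingular sublocus has codimension $2$, so one heuristically expects a nonempty but finite set of points of $M[8]$ where $P$ is supersingular. Among these, supersingularity of $X$ is an additional condition but should still be met for some $p$ because of the special CM nature of $M[8]$. For each $p$, I would enumerate parameter values of $M[8]$ over small extensions $\FF_{p^n}$, using the explicit equations for the curves in $M[8]$ and for their unramified double covers developed in Section~\ref{Section3}, and test each resulting $Y$.

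The key computational step is a rigorous supersingularity test. A fast first filter is the Hasse-Witt matrix of $Y$, which is straightforward to read off from the defining equations and detects the $p$-rank; any candidate with positive $p$-rank is discarded. For surviving candidates one computes $\#Y(\FF_{p^j})$ for $j = 1,\dots,5$, extracts the characteristic polynomial of Frobenius on $H^1_{\cris}(Y)$ via the Weil conjectures, and checks that every slope of its Newton polygon equals $1/2$. Since $\Jac(Y)$ is isogenous to $\Jac(X) \times P$, one may equivalently split this test between $X$ and $P$, which is computationally lighter.

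The main obstacle I anticipate is existence rather than efficiency: it is not a priori clear that $M[8]$ contains a supersingular point for every $p \equiv 1 \bmod 4$ in this range. If the search inside $M[8]$ returns no supersingular example for some $p$, the fallback is to broaden the search to the full moduli of plane quartics equipped with a nontrivial $2$-torsion point of $\Jac(X)$, or to restrict to loci of genus $3$ curves with extra automorphisms so as to cut the search dimension. Throughout, smoothness of $X$ is verified directly from the defining equations, and smoothness of $Y$ follows automatically since $\pi$ is étale.
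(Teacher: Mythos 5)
Your proposal has a genuine gap: the primary search strategy cannot succeed. You propose to search inside the $M[8]$ family using ``the unramified double covers developed in Section~\ref{Section3},'' but that cover $\pi: Y \to C_1$ has Prym isogenous to $E_1 \times E_2$ where $E_2$ is isomorphic to $y^2 = x^3 - x$ (Proposition~\ref{proposition:prym description} and the proof of Proposition~\ref{prop:supersingularequivalent}). That curve has $j$-invariant $1728$ and is \emph{ordinary} whenever $p \equiv 1 \bmod 4$, so for every prime covered by this theorem and every choice of $(t_1,t_2)$ the Newton polygon of $Y$ has a slope-$0$ segment and $Y$ is never supersingular. The enumeration would return nothing for all eleven primes. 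A secondary problem is that the $p$-rank-drop-forces-supersingularity mechanism (Proposition~\ref{cor:possible prank}) also relies on $p \equiv 3 \bmod 4$: when $p$ splits in $\ZZ[i]$ the Frobenius preserves the eigenspaces $H^1(\PP^1,L_1)$ and $H^1(\PP^1,L_3)$, the $\mu$-ordinary polygon is ordinary, and the basic one need not be supersingular, so even supersingularity of $C_1$ itself is no longer cut out by a single explicit polynomial condition.

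Your fallback --- searching ``the full moduli of plane quartics equipped with a nontrivial $2$-torsion point'' --- points in the right direction but is missing the idea that makes the computation feasible. Imposing supersingularity on both $X$ (codimension $4$ in $\cM_3$) and the Prym (codimension $2$ in $\cA_2$) over a six-dimensional parameter space is hopeless as a brute-force search. The paper inverts the problem: it first fixes a supersingular p.p.\ abelian surface $P$ (taken from the LMFDB as $\Jac(Z)$ for an explicit genus~$2$ curve $Z$), and then uses the Beauville--Verra description of the fiber of the Prym map, namely that every quartic $X$ with an unramified double cover of Prym $P$ is a plane section $X = K \cap V$ of the Kummer surface $K = P/[-1] \subset \PP^3$. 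This guarantees the Prym condition for free and reduces the search to the $O(p^3)$ planes $V$ over $\mathfont{F}_p$, to which one applies exactly the two-stage test you describe (Cartier/Hasse--Witt stable rank $0$ as a filter, then the L-polynomial). Your supersingularity verification is sound; it is the parametrization of candidates that needs to change.
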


The strategy for Theorem~\ref{Tcomputation} is to start with a supersingular p.p.\ abelian surface $P$;
the LMFDB provides a list of these over $\mathfont{F}_p$ for $p$ in this range \cite{lmfdb}.
Then we find smooth quartic plane curves $X$ having an unramified double cover $\pi : Y \to X$ whose Prym is $P$.
By \cite{Beauville} and \cite{Verra}, such curves $X$ occur as the intersection of a plane $V$ with a 
projective model of the Kummer surface $K$ of $P$ in $\mathfont{P}^3$.
We computationally verify the conjecture for $p < 100$ by searching for planes $V$ 
for which $X = V \cap K$ is a smooth curve of genus $3$, 
not containing any of the singularities of $K$, and which is supersingular.

\subsection{Rationale for the Conjecture}

In Section~\ref{Srationale}, we provide several rationales for Conjecture~\ref{conjecture}.
The basic idea is to compare the dimension of the supersingular locus of the moduli space $\cM_g$ of curves of genus $g$
with the codimension of the supersingular locus in $\cA_{g-1}$.  Conjecture~\ref{conjecture} is based on this comparison when $g=3$.
This comparison also works well when $g=2$ and, surprisingly, does not seem to have been investigated before.
For $p \equiv 3 \bmod 4$, we include a result about supersingular curves of genus $3$ that are unramified double covers of a genus $2$ curve. 

As a variation, we consider supersingular curves that are double covers of another curve branched at exactly two points.
This includes the supersingular Howe curves of genus $4$ found in \cite{khs20}.
In Conjecture~\ref{conjecture2}, we propose a variation of Conjecture~\ref{conjecture} about supersingular curves of genus $6$ that are double covers of a genus $3$ curve.

In Section~\ref{Srationale}, we also explain the difficulties in proving these conjectures that are caused by (families of) supersingular singular curves.

\subsection{Relation to Previous Work}
After proving Theorem~\ref{Tintro}, 
we searched the literature for earlier results about supersingular curves in genus $5$ in characteristic $p$.
We found several such results, which apply under a patchwork of
congruence conditions on $p$, but which ultimately do not treat all primes with $p \equiv 3 \bmod{4}$.

These earlier results involve abelian covers $\TAU: C \to \mathfont{P}^1$ branched at three points.\footnote{Abelian covers of $\mathfont{P}^1$ branched at three points are quotients of Fermat curves.
Many authors determined conditions when such curves are supersingular \cite{Yui,Aoki,re2008,re2009}.
In \cite[Theorem~13]{re2009}, Re found a supersingular curve of genus $g$ over $\overline{\FF}_p$ 
for all but 16 pairs $(g,p)$ such that $1 \leq g \leq 100$ and $3 \leq p \leq 23$.}  
Such a curve $C$ is supersingular when $p$ satisfies appropriate congruence conditions modulo $\mathrm{deg}(\TAU)$.  
This situation produces a supersingular curve of genus $5$ over $\overline{\FF}_p$ if and only if at 
least one of the following holds:
$p \equiv -1 \bmod 8,11,12,15,20$ or $p \equiv -4 \bmod 15$ \cite[page 173]{Ekedahl87};
$p$ is a quadratic non-residue modulo $11$ \cite{LMPT1}; or $p \equiv 11 \bmod 20$ \cite[Theorem~6.1]{BPrimsproc}.

These cases do not cover all primes $p \equiv 3 \bmod 4$, with the new cases being $p \equiv 43,67 \bmod 120$.
Thus Theorem~\ref{Tintro} verifies the existence of supersingular genus $5$ curves over $\overline{\FF}_p$ for infinitely many more primes $p$, 
using a new approach which applies uniformly for all $p \equiv 3 \bmod 4$. 
Furthermore, these earlier constructions do not verify Conjecture~\ref{conjecture} because none of these supersingular 
curves of genus $5$ have fixed-point free involutions.

For primes $p$ with $p \equiv 1 \bmod{4}$, the smallest primes for which the earlier results do not apply are 
$37$, $53$, and $97$, which are covered by Theorem~\ref{Tcomputation}.


\section{A Criterion for Supersingularity} \label{Section2}

Throughout, we work over an algebraically closed field $k$ whose characteristic $p$ is an odd prime.
All curves in Sections~\ref{Section2}, \ref{Section3}, and \ref{sec:computational} are smooth projective irreducible curves over $k$.

Let $\TAU : \CCC \to W$ be a branched cyclic cover of smooth curves.  
Let $m$ be the degree of $\TAU$ and suppose that $p \nmid m$.
In this section, we describe a technique for obtaining information about the $p$-rank of $\CCC$.
The approach is based on work of Tamagawa about generalized Hasse--Witt invariants \cite[Section 3]{tamagawa03}.

\subsection{The Group Action and Frobenius}\label{ss:tamagawa}
The $\Z/m\Z$-action on $\CCC$ induces a $\Z/m \Z$-action on $\cO_\CCC$ and on $H^1(\CCC,\cO_\CCC)$.  Fix a primitive character $\chi : \Z/m\Z \to k^\times$.  As in \cite[Section 3]{tamagawa03}, $\TAU_* \cO_\CCC$ decomposes as a direct sum of line bundles on $W$:
\begin{equation} \label{eq:decomposepushforward}
\TAU_* \cO_\CCC \cong \bigoplus_{i \in \Z/m\Z} L_i,
\end{equation}
where $\Z/m\Z$ acts on $L_i$ via $\chi^i : \Z/m\Z \to k^\times$.

The branched cyclic cover $\TAU : \CCC \to W$ can be described using geometric class field theory 
as the pullback of a cyclic cover of a generalized Jacobian of $W$.  
Let $S \subset W(k)$ be the set of branch points of $\TAU$.  Let $U \colonequals W - S$ and
$V \colonequals \CCC-\TAU^{-1}(S)$.  
Then $\TAU|_V : V \to U$ is \'{e}tale, and $V$ is a $\mu_m$-torsor over $U$.  
By \cite[Proposition~3.5]{tamagawa03}, the cover $\TAU : \CCC \to W$ corresponds to a line bundle $L$ on $W$ and a divisor $D$ supported on $S$ with coefficients in $\{0,1,\ldots,m-1\}$ such that $L^{\otimes m} \otimes \cO_W(D)$ is trivial.  Furthermore, the proof shows that we may assume $L = L_1$.

For a divisor $D'$ on $W$, we set $\floor{D'/m} \colonequals \sum_{P \in W} \floor{\ord_{P}(D')/m} P$ where $\floor{x}$ denotes the greatest integer less than or equal to $x$.

\begin{proposition} \label{prop:tamagawa line bundles}
The line bundle $L_i$ appearing in \eqref{eq:decomposepushforward} equals $L^{\otimes i}(\floor{i D / m})$.  
\end{proposition}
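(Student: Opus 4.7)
The plan is to construct a canonical injective map of line bundles $L^{\otimes i} \hookrightarrow L_i$ on $W$ from the $\cO_W$-algebra structure on $\TAU_*\cO_\CCC$, and then identify its cokernel as the divisor $\floor{iD/m}$ by a local analysis. Multiplication in $\TAU_*\cO_\CCC$ restricts on eigenspaces to $\cO_W$-linear maps $L_j \otimes L_k \to L_{(j+k) \bmod m}$; iterating with $j = k = 1$ gives the canonical map $L^{\otimes i} = L_1^{\otimes i} \to L_i$. Since $\TAU_*\cO_\CCC$ is locally free of rank $m$ and this map is nonzero at the generic point, it is injective. Over the unramified locus $U = W \setminus S$, the cover $\TAU|_U$ is an \'etale $\mu_m$-torsor, so $\TAU_*\cO_\CCC|_U$ is \'etale-locally isomorphic to $\cO_U[\Z/m\Z]$ with coordinatewise multiplication; hence each $L^{\otimes i} \to L_i$ is an isomorphism there, and the cokernel is supported on $S$.

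To compute the cokernel at a branch point $P \in S$ with $d = d_P$, choose a uniformizer $t$ at $P$ and a local generator $\ell$ of $L$ near $P$ so that the trivialization $L^{\otimes m} \otimes \cO_W(D) \cong \cO_W$ sends $\ell^{\otimes m}$ to $t^d$. Let $Q$ be a point of $\CCC$ above $P$ with ramification index $e = m/\gcd(m,d)$; from $\ell^m = t^d$ one reads off $\ord_Q(\ell) = d/\gcd(m,d)$. Using B\'ezout to choose $a',b'$ with $a'(d/\gcd(m,d)) - b'e = 1$, one obtains a uniformizer $u = \ell^{a'} t^{-b'}$ at $Q$, and an inertia-equivariant calculation at $Q$, combined with the transitive $\Z/m\Z$-action on $\TAU^{-1}(P)$, identifies the $\chi^i$-eigenspace of the completed stalk $\widehat{(\TAU_*\cO_\CCC)}_P$ as the free rank-one $\widehat{\cO}_{W,P}$-module generated by $u^{j_0}$, where $j_0 \equiv i d/\gcd(m,d) \pmod{e}$ is the smallest nonnegative residue.

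Comparing orders at $Q$, we have $\ord_Q(\ell^i) = id/\gcd(m,d)$ and $\ord_Q(u^{j_0}) = j_0$, with difference $e \cdot \floor{id/m}$; since $\ord_Q(t) = e$, this forces $\ell^i = t^{\floor{i d_P / m}} \cdot u^{j_0}$ up to a unit in $\widehat{\cO}_{\CCC,Q}$. Hence the canonical map $L^{\otimes i} \to L_i$ is locally multiplication by $t^{\floor{id_P/m}}$, its cokernel at $P$ has length $\floor{id_P/m}$, and summing over branch points yields $L_i \cong L^{\otimes i}(\floor{iD/m})$.

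I expect the main obstacle to be the local eigenspace identification at partially ramified branch points, where $\gcd(m, d_P) > 1$ and multiple points of $\CCC$ lie over $P$: the single rank-one stalk $L_{i,P}$ must be reconstructed from the inertia-invariant data at a chosen $Q$, using Galois transitivity to handle the other preimages. Once this is set up, the order comparison at $Q$ and the Bezout-produced uniformizer make the cokernel calculation routine.
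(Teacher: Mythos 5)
Your argument is correct, but it is worth noting that the paper does not actually prove this statement: its ``proof'' is a one-line citation to the proof of Claim~(3.8) in Tamagawa's paper, so what you have written is a self-contained reconstruction of the argument that the paper outsources. The structure is sound: the algebra multiplication on $\TAU_*\cO_\CCC$ gives the canonical injection $L^{\otimes i}=L_1^{\otimes i}\into L_i$, which is an isomorphism over $U=W-S$ since there $\TAU$ is a $\mu_m$-torsor, and the cokernel length at a branch point $P$ is computed from the completed stalk. Your local bookkeeping checks out: with $d=d_P$, $e=m/\gcd(m,d)$, and $\ord_Q(\ell)=d/\gcd(m,d)$, the $\chi^i|_{I_Q}$-eigenspace of $\widehat{\cO}_{\CCC,Q}\cong k[[u]]$ is $u^{j_0}k[[t]]$ with $j_0\equiv id/\gcd(m,d)\pmod e$, and $id/\gcd(m,d)-j_0 = e\lfloor id/m\rfloor$ exactly because $(id/\gcd(m,d))/e = id/m$; projection to the $Q$-component identifies the $\chi^i$-eigenspace of $\prod_{Q'\mid P}\widehat{\cO}_{\CCC,Q'}$ with this module by transitivity of the $\Z/m\Z$-action. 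The only step you state without justification that deserves a sentence is the normalization $\ell^m=t^d$ (up to unit) \emph{as functions on $\CCC$}: this requires that the pair $(L,D)$ of Tamagawa's Proposition~3.5 be defined so that the $m$-fold multiplication map $L_1^{\otimes m}\to L_0=\cO_W$ has image exactly $\cO_W(-D)$, which is indeed how that correspondence is set up, but it is the point where your local model is tied to the global data and should be made explicit. With that noted, your proof is complete and, I expect, essentially the computation Tamagawa performs.
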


\begin{proof}
This is established in the proof of \cite[Claim~(3.8)]{tamagawa03}.
\end{proof}

We wish to study the Frobenius map on 
\begin{equation}
H^1(\CCC,\cO_\CCC) \cong \bigoplus_{i \in \Z/m\Z} H^1(W,L_i).
\end{equation}
The Frobenius map sends $H^1(W,L_i)$ to $H^1(W,L_{pi})$, where the subscript is taken modulo $m$.  Therefore it is natural to study its $r$th power, where $r$ is the order of $p$ in $(\Z/m\Z)^\times$.  
This is a semi-linear map $\varphi_{\TAU,i} : H^1(W,L_i) \to H^1(W,L_i)$.  
Let $\varphi \colonequals \varphi_{\TAU,1}$.
The next result provides an explicit description of $\varphi$; 
a similar description is available for $\varphi_{\TAU,i}$ when $i \not = 1$.

\begin{proposition} \label{proposition:frobenius formula}
For a branched cover $\TAU : \CCC \to W$ corresponding to $(L,D)$ as above, fix an isomorphism $\iota: L^{\otimes m} \cong \cO_W(-D)$.  Then $\varphi : H^1(W,L) \to H^1(W,L)$ is induced by the composition
\begin{equation} \label{Elinebundle}
L \overset{F^r} \to L^{\otimes p^r} = L \otimes L^{\otimes (p^r-1)} \overset{\iota} 
\cong L \otimes \cO_W \left( - \left(\frac{p^r-1}{m}\right) D \right) \into L.
\end{equation}
\end{proposition}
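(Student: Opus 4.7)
The plan is to trace the absolute Frobenius of $\CCC$ through the decomposition \eqref{eq:decomposepushforward}, identify the resulting semi-linear self-map of $L = L_1$ with the composition in \eqref{Elinebundle} at the sheaf level, and then pass to $H^1$.

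First, I would set up the Frobenius at the sheaf level. The pushforward of $F_\CCC : \CCC \to \CCC$ carries a local section $s$ of $L_i \subset \TAU_*\cO_\CCC$ (characterized by $\sigma \cdot s = \chi(\sigma)^i s$ for $\sigma \in \Z/m\Z$) to $s^p$, which satisfies $\sigma \cdot s^p = \chi(\sigma)^{ip} s^p$ and hence lies in $L_{pi \bmod m}$. Iterating $r$ times gives an $\cO_W$-semi-linear endomorphism of $L_1 = L$ sending $s \mapsto s^{p^r}$, whose induced map on $H^1(W,L)$ is by definition $\varphi$.

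Next, I would compute $s^{p^r}$ in the ring $\TAU_*\cO_\CCC$ by writing $p^r = 1 + m \cdot \tfrac{p^r-1}{m}$, so that $s^{p^r} = s \cdot (s^m)^{(p^r-1)/m}$. The crucial input is that the multiplication $L_1^{\otimes m} \to L_0 = \cO_W$ inside $\TAU_*\cO_\CCC$ coincides, under the identifications of Proposition~\ref{prop:tamagawa line bundles}, with $\iota : L^{\otimes m} \xrightarrow{\sim} \cO_W(-D)$ followed by the canonical inclusion $\cO_W(-D) \hookrightarrow \cO_W$. Granting this, $\iota(s^m)^{(p^r-1)/m}$ is a section of $\cO_W\bigl(-\tfrac{p^r-1}{m}D\bigr)$, so $s^{p^r}$ lies in $L \otimes \cO_W\bigl(-\tfrac{p^r-1}{m}D\bigr) \subset L$. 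To match this with \eqref{Elinebundle}, interpret the first arrow $F^r$ as the semi-linear Frobenius pullback $s \mapsto s^{\otimes p^r}$, split off one tensor factor of $L$, and apply $\iota^{\otimes (p^r-1)/m}$ to the remaining $L^{\otimes(p^r-1)}$; this reproduces $s \mapsto s \otimes \iota(s^m)^{(p^r-1)/m}$ landing in $L \otimes \cO_W\bigl(-\tfrac{p^r-1}{m}D\bigr)$, and the final arrow is the canonical inclusion into $L$.

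The main obstacle is the compatibility statement in the preceding paragraph: the ring structure on $\TAU_*\cO_\CCC$ must literally be induced by $\iota$. This is essentially the content of the construction recalled before Proposition~\ref{prop:tamagawa line bundles}, going back to \cite[Proposition~3.5]{tamagawa03}: the cover $\CCC$ is reconstructed from $(L,D)$ as $\Spec_W\bigl(\bigoplus_{i=0}^{m-1} L^{\otimes i}(\floor{iD/m})\bigr)$, with $\cO_W$-algebra structure built from $\iota$ and the natural inclusions between the line bundles $L^{\otimes i}(\floor{iD/m})$. Granting this unwinding, the Frobenius formula \eqref{Elinebundle} is exactly what emerges after passing to cohomology.
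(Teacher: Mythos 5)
Your argument is correct and is essentially the one the paper relies on: the paper's proof of this proposition is just a citation to \cite[\S 3, p.~76]{tamagawa03}, and what you have written is the standard unwinding of that reference --- tracking the $p$-power map through the eigensheaf decomposition of $\TAU_*\cO_\CCC$ and iterating $r$ times. Your key compatibility, that the multiplication $L_1^{\otimes m}\to L_0=\cO_W$ inside $\TAU_*\cO_\CCC$ is $\iota$ followed by the inclusion $\cO_W(-D)\hookrightarrow\cO_W$, is exactly the content of Tamagawa's reconstruction of the cover from $(L,D)$, so the identity $s^{p^r}=s\cdot(s^m)^{(p^r-1)/m}$ does yield the composition in \eqref{Elinebundle}.
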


\begin{proof}
See \cite[\S 3, page~76]{tamagawa03}.
\end{proof}

\begin{example} \label{example:elliptic curve}
Let $p$ be odd. 
Let $E: y^2 = x(x-1)(x-\lambda)$ be an elliptic curve over $k$.
As in \cite[Theorem V.4.1(b)]{silvermanaec}, let
\begin{equation} \label{EHplambda}
H_p(\lambda) \colonequals 
\sum_{j=0}^{(p-1)/2} \binom{(p-1)/2}{j}^2 \lambda^j. 
\end{equation}
It is well-known that $E$ is supersingular if and only if $H_p(\lambda)=0$.
We explain how this follows from Proposition~\ref{proposition:frobenius formula}, taking $\CCC=E$ and 
$W= \PP^1$ and $\TAU : \CCC \to W$ the projection map onto the $x$-axis. 

In this case, $m=2$ and $S= \{0,1,\lambda,\infty\}$.
The cover $\TAU$ is determined by the line bundle $L = \cO_{\PP^1}(-2 [\infty])$ and the divisor 
$D = [0] + [1] + [\lambda] + [\infty]$.  
The isomorphism $L^{\otimes 2} \cong \cO_{\PP^1}(-D)$ is given by multiplying by the function $x(x-1)(x-\lambda)$.
Then $L_1 = \cO_{\PP^1}(-2 [\infty])$, with $\dim H^1(\PP^1,L_1) = 1$.  
In the standard \v{C}ech description, the equivalence class of $x^{-1}$ is a basis element of $H^1(\PP^1,L_1)$.  
So
\[
\varphi(x^{-1}) = x^{-p} \left( x (x-1)(x-\lambda) \right)^{(p-1)/2} \equiv h_p(\lambda) \bmod{ x^{-2} k[x^{-1}] \oplus k[x]},
\]
where $h_p(\lambda)$ is the coefficient of $x^{-1}$ in $x^{-p} \left( x (x-1)(x-\lambda) \right)^{(p-1)/2}$.  
Note $\varphi$ is the zero map if and only if $h_p(\lambda)=0$, so this condition on $\lambda$ 
is equivalent to $E$ being supersingular.  
By the binomial theorem, one can check that $h_p(\lambda) = \pm H_p(\lambda)$.
\end{example}

\subsection{A Special Family of Curves}  

We now consider a family of covers $\TAU : C_1 \to \mathfont{P}^1$ such that there are restrictions on 
the Newton polygons of the curves $C_1$ in the family.  

Consider the family $M[8]$ of genus $3$ curves that are $\ZZ/4\ZZ$-covers of $\PP^1$ branched at 5 points 
with inertia type $a=(1,1,2,2,2)$.  (The notation $M[8]$ is based on \cite[Table~2]{moonen}.)
Each curve in this family has an affine equation of the form:
\begin{equation} \label{EM8family}
C_1:=C_1(t_1,t_2): y^4=x^2(x-1)^2(x-t_1)(x-t_2),
\end{equation}
for some $t_1, t_2 \in k - \{0,1\}$ with $t_1 \not = t_2$.  The group $\ZZ/4 \ZZ \cong \mu_4$ naturally acts on $C_1$ via 
multiplication on $y$, and projecting to the $x$-coordinate gives a $\Z/4\Z$-cover $\TAU : C_1 \to \PP^1$.  

\begin{proposition} \label{cor:possible prank}
Let $p \equiv 3 \bmod 4$.  For the generic choice of $t_1, t_2$, 
the $p$-rank of $C_1(t_1,t_2)$ is $2$.  If its $p$-rank is less than $2$, then $C_1(t_1,t_2)$ is supersingular.  
\end{proposition}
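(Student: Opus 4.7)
The plan is to combine the Tamagawa decomposition from Section~\ref{ss:tamagawa} with a Newton polygon constraint coming from the $\Z[i]$-action on $\Jac(C_1)$.

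First I would apply Proposition~\ref{prop:tamagawa line bundles} with $m = 4$ and divisor $D = [t_1] + [t_2] + 2[0] + 2[1] + 2[\infty]$ of degree $8$. A direct calculation then gives $L_1 \cong \cO_{\PP^1}(-2)$, $L_2 \cong \cO_{\PP^1}(-1)$, and $L_3 \cong \cO_{\PP^1}(-3)$, so that
\[
H^1(C_1, \cO_{C_1}) \cong H^1(\PP^1, L_1) \oplus H^1(\PP^1, L_3)
\]
has summands of dimensions $1$ and $2$ respectively, while $H^1(\PP^1, L_2) = 0$. Since $p \equiv 3 \bmod 4$, Frobenius $F$ swaps these two summands; writing $A\colon H^1(\PP^1, L_1) \to H^1(\PP^1, L_3)$ and $B\colon H^1(\PP^1, L_3) \to H^1(\PP^1, L_1)$ for its restrictions, I have $\varphi_{\TAU, 1} = B \circ A$ and $\varphi_{\TAU, 3} = A \circ B$. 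Because $A \circ B$ factors through the $1$-dimensional space $H^1(\PP^1, L_1)$, a short computation shows that $A \circ B$ and $B \circ A$ have the same stable rank (i.e.\ $\dim \bigcap_{n} \mathrm{Im}(\varphi^n)$) as semi-linear operators. Since the $p$-rank of $C_1$ equals the total stable rank of Frobenius on $H^1(C_1, \cO_{C_1})$, it is either $0$ or $2$.

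Next I would show that $p$-rank $0$ forces $C_1$ to be supersingular, by exploiting the $\Z[i]$-action on $J \colonequals \Jac(C_1)$ coming from $\mu_4 \subset \mathrm{Aut}(C_1)$. The intermediate quotient $C_1/\mu_2$ is $\PP^1$ (the substitution $w = y^2/(x(x-1))$ gives $w^2 = (x-t_1)(x-t_2)$), so the $\chi_2$-isotypic part of $J$ vanishes and $J$ is $\Z[i]$-isotypic of signature $(2, 1)$, read off from the eigenspace dimensions of $H^0(C_1, \Omega^1)$. When $p \equiv 3 \bmod 4$, the prime $p$ is inert in $\Z[i]$, so crystalline Frobenius swaps the two $\mathfont{Q}_{p^2}$-isotypic summands of $H^1_{\cris}(J)[1/p]$; this forces every Newton slope of $J$ to occur with even multiplicity. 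Combined with polarization symmetry and Manin's divisibility condition on slope multiplicities, the only permissible Newton polygons are $(0, 0, \tfrac{1}{2}, \tfrac{1}{2}, 1, 1)$ (with $p$-rank $2$) and $(\tfrac{1}{2})^6$ (supersingular, with $p$-rank $0$). Thus $p$-rank $< 2$ entails $p$-rank $= 0$ and $C_1$ supersingular, matching the conclusion from the first paragraph.

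It remains to check that the $p$-rank equals $2$ on a dense open subset, which amounts to $B \circ A$ not vanishing identically as a function of $(t_1, t_2)$. This is the main obstacle, since ruling out the degenerate case that the whole family is supersingular requires more than Tamagawa's framework alone. Proposition~\ref{proposition:frobenius formula} expresses $B \circ A$ as a specific polynomial in $t_1, t_2$ with coefficients in $\F_p$ (a coefficient extraction from a power of $x^2(x-1)^2(x-t_1)(x-t_2)$ after iterating $F$), whose non-vanishing can be certified by evaluating at a single explicit $(t_1, t_2)$ at which $C_1$ is known to be non-supersingular. Alternatively, since the Torelli image of the $M[8]$ family is an open subset of a PEL-type Shimura subvariety of $\cA_3$, results on $\mu$-ordinary Newton strata imply that the polygon $(0, 0, \tfrac{1}{2}, \tfrac{1}{2}, 1, 1)$ is the generic one on this Shimura subvariety, and hence the generic $p$-rank is $2$.
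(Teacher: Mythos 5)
Your argument is correct and, at its core, is the same argument the paper uses (the paper simply outsources it to \cite[Section~6.1]{LMPT2}): the $\Z/4\Z$-action with signature $(2,0,1)$ constrains the Newton polygon to be either $\mu$-ordinary or basic, and for $p \equiv 3 \bmod 4$ these are the $p$-rank $2$ polygon and the supersingular polygon. What you do differently is make the Kottwitz step elementary: since $p$ is inert in $\Z[i]$ and the $\chi^0$- and $\chi^2$-parts of $H^1$ vanish, crystalline Frobenius swaps the two $\Z_{p^2}$-eigenspaces, so every Newton slope occurs with even multiplicity; among the five symmetric genus-$3$ polygons only $(0,0,\tfrac12,\tfrac12,1,1)$ and $(\tfrac12)^6$ survive. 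That is a clean, self-contained substitute for the citation, and your parallel Hasse--Witt observation (stable rank of $BA$ equals that of $AB$, forcing $p$-rank $0$ or $2$) is consistent with it and reappears in the paper's proof of Proposition~\ref{prop:m8ss}. The one soft spot is the genericity claim: your route (a) is only a plan, since you never exhibit a witness $(t_1,t_2)$ with nonzero $p$-rank, while route (b) (density of the $\mu$-ordinary locus on the special subvariety attached to $M[8]$ by \cite{moonen}, i.e.\ Wedhorn's theorem) is legitimate and is in effect what \cite{LMPT2} does. Note that route (a) can be closed without any search using the paper's own later computation: by \eqref{Eforbp} the polynomial $b_p(t_1,t_2)$ of Definition~\ref{Dpolyb} has constant term $1$, hence is not identically zero, so the supersingular locus is a proper closed subset of the $(t_1,t_2)$-plane and the generic $p$-rank is $2$.
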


\begin{proof}
This is proved in \cite[Section~6.1]{LMPT2}.
The main idea is that the $\ZZ/4\ZZ$-cover $\TAU: C_1 \to \PP^1$ has signature $(f_1,f_2,f_3) = (2,0,1)$.  
Here $f_i=\mathrm{dim}(L'_i)$ where $L'_i$ is the $i$th eigenspace for the $\ZZ/4\ZZ$-action on $H^0(C_1, \Omega^1)$.
By the Kottwitz method, there are constraints on the Newton polygon of a p.p.\ abelian threefold with a
$\ZZ/4\ZZ$-action with this signature.  In particular, there are only two choices for the Newton polygon, called the
$\mu$-ordinary and basic Newton polygons.
In this case, these can be distinguished by the $p$-rank.  For $p \equiv 3 \bmod 4$, the $\mu$-ordinary one has $p$-rank $2$ and the basic 
one is supersingular.
\end{proof}

We make this explicit by determining conditions on $t_1, t_2$ for $C_1(t_1,t_2)$ to be supersingular.

\begin{definition} \label{Dpolyb}
Let $A \colonequals (p^2-1)/4$, and let $b_p(t_1,t_2) \in k[t_1,t_2]$ be the coefficient of $x^{2A}$ in 
\begin{equation}\label{EdefBp}
\left( (x-1)^2 (x-t_1)(x-t_2) \right)^A.
\end{equation}
\end{definition}

\begin{proposition} \label{prop:m8ss}
Suppose $p \equiv 3 \bmod 4$.  The curve $C_1(t_1,t_2)$ in \eqref{EM8family} is supersingular if and only if $b_p(t_1,t_2) =0$.
\end{proposition}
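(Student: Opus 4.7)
The plan is to apply the Tamagawa machinery of Section~\ref{Section2} to the $\ZZ/4\ZZ$-cover $\TAU : C_1 \to \PP^1$ given by $y^4 = f(x)$ with $f(x) = x^2(x-1)^2(x-t_1)(x-t_2)$, following the blueprint of Example~\ref{example:elliptic curve}. First I would identify the Tamagawa data: reading the multiplicities of $f$ modulo $4$ (and noting that $f$ has a pole of order $6$ at infinity, with $-6 \equiv 2 \bmod 4$), the branch divisor is $D = 2[0] + 2[1] + [t_1] + [t_2] + 2[\infty]$. The natural choice of line bundle is $L = L_1 = \cO_{\PP^1}(-2[\infty])$, and the isomorphism $\iota : L^{\otimes 4} \cong \cO(-D)$ is given by multiplication by $f(x)$. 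Using Proposition~\ref{prop:tamagawa line bundles}, short computations give $\deg L_2 = -1$ and $\deg L_3 = -3$, so $\dim H^1(\PP^1, L_1) = 1$, $H^1(\PP^1, L_2) = 0$, and $\dim H^1(\PP^1, L_3) = 2$, totaling $g = 3$.

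Since $p \equiv 3 \bmod 4$, the order of $p$ in $(\ZZ/4\ZZ)^\times$ is $r = 2$, and the absolute Frobenius $F$ permutes $H^1(\PP^1, L_1) \leftrightarrow H^1(\PP^1, L_3)$ (with $H^1(\PP^1, L_2) = 0$ contributing nothing). The next step is to reduce supersingularity of $C_1(t_1,t_2)$ to the vanishing of $\varphi := \varphi_{\TAU, 1}$ on the one-dimensional space $H^1(\PP^1, L_1)$. If $\varphi \neq 0$, then it is a nonzero Frobenius-semilinear endomorphism of a one-dimensional space, hence an isomorphism, so the stable rank of $F$ is at least $1$ and the $p$-rank of $C_1$ is positive; by the dichotomy of Proposition~\ref{cor:possible prank}, the $p$-rank then equals $2$. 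Conversely, if $\varphi = 0$, then the stable rank drops below $2$ and the same dichotomy forces $C_1$ to be supersingular.

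The final step is to compute $\varphi$ explicitly. Using the \v{C}ech cover by $U_0 = \PP^1 - \{\infty\}$ and $U_\infty = \PP^1 - \{0\}$, the class of $x^{-1}$ represents a basis of $H^1(\PP^1, L_1)$, and Proposition~\ref{proposition:frobenius formula} gives
\[ \varphi(x^{-1}) \equiv x^{-p^2} f(x)^A = x^{-(p^2+1)/2} \left((x-1)^2(x-t_1)(x-t_2)\right)^A \pmod{x^{-2} k[x^{-1}] \oplus k[x]}. \]
Reading off the coefficient of $x^{-1}$, this equals $b_p(t_1,t_2) \cdot x^{-1}$ by the definition of $b_p$, so $\varphi = 0$ if and only if $b_p(t_1,t_2) = 0$. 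The main obstacle I expect is the bookkeeping in the middle step, connecting a scalar on a single one-dimensional eigenspace to the global $p$-rank when Frobenius cycles eigenpieces of differing dimensions; the sharp dichotomy in Proposition~\ref{cor:possible prank} is what bypasses the need for a separate analysis of $\varphi_{\TAU, 3}$ on the two-dimensional piece.
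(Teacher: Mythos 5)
Your proposal is correct and follows essentially the same route as the paper's proof: the same Tamagawa data $(L,D)$, the same eigenspace dimensions $(0,1,0,2)$, the same reduction of supersingularity to the vanishing of $\varphi$ on the one-dimensional piece via the dichotomy of Proposition~\ref{cor:possible prank}, and the same \v{C}ech computation identifying the scalar with $b_p(t_1,t_2)$. The only cosmetic difference is that the paper notes explicitly that $\varphi_{\TAU,3}$ factors through the one-dimensional $H^1(\PP^1,L_1)$ (so the stable rank on $H^1(\PP^1,L_3)$ is at most $1$), whereas you fold that observation into the phrase ``the stable rank drops below $2$''; both versions are fine.
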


\begin{proof}
We apply the technique from Section~\ref{ss:tamagawa} to the cover $\TAU: C_1 \to \PP^1$.  In that notation, $W = \PP^1$, $m=4$, and $S = \{0,1,t_1,t_2,\infty\}$.  The cover corresponds to $L = \cO_{\PP^1}(-2[\infty])$ and $D = 2[0] + 2[1] + 2 [\infty] + [t_1]+[t_2]$.  The isomorphism $L^{\otimes 4} \cong \cO_{\PP^1}(-D)$ is given by multiplying by the function $x^2(x-1)^2(x-t_1)(x-t_2)$.
We decompose $\TAU_* \cO_{C_1}$ as a direct sum of four line bundles based on the $\Z/4\Z$-action.  
By Proposition~\ref{prop:tamagawa line bundles}, we see that
\begin{align*}
L_0 = \cO_{\PP^1}, \quad L_1 & \cong L = \cO_{\PP^1}(-2[\infty]), \quad L_2 \cong L^{\otimes 2} ([0] + [1] + [\infty]) \cong \cO_{\PP^1}(-[\infty]) \quad \text{and}
\\ L_3 &\cong L^{\otimes 3}([0] + [1] + [\infty]) \cong \cO_{\PP^1}(-3 [\infty]).
\end{align*}
Thus $\dim_k H^1(\PP^1,L_i)$ is $0,1,0,2$ for $i=0,1,2,3$, respectively.  

The Frobenius map permutes these subspaces of $H^1(C_1,\cO_{C_1})$. 
When $p \equiv 3 \bmod{4}$, it exchanges $H^1(\PP^1,L_1)$ and $H^1(\PP^1,L_3)$.  
The $p$-rank is the stable rank of the Hasse--Witt matrix for the Frobenius map. 
The stable rank restricted to $H^1(\PP^1,L_3)$ is at most $1$ since $H^1(\PP^1,L_1)$ is $1$-dimensional.
Consider the composition
\[
\varphi: H^1(\PP^1,L_1) \to H^1(\PP^1,L_3) \to H^1(\PP^1,L_1).
\]

{\bf Claim:} $C_1(t_1,t_2)$ is supersingular if and only if $\varphi$ is the zero map.

\emph{Proof of claim:}
If $\varphi$ is the zero map, then the $p$-rank of $C_1$ is at most one, and so $C_1$ is supersingular by Proposition~\ref{cor:possible prank}.  
On the other hand, if $\varphi$ is non-zero, then $\varphi_{\TAU, 3}$ is also non-zero;
in this case, the $p$-rank (namely, the stable rank of the Hasse--Witt matrix) is two.
This completes the proof of the claim.  

We use Proposition~\ref{proposition:frobenius formula} to see the effect of the parameters $t_1$ and $t_2$ on $\varphi$.  
Recall that $\varphi$ is induced by the map of line bundles $\varphi': L \to L$ from \eqref{Elinebundle}.
Using the isomorphism $L^{\otimes 4} \cong \cO_{\PP^1}(-D)$ given above, and letting $A = (p^2-1)/4$, then 
on local sections $\varphi'$ sends 
\begin{equation} \label{Emapf}
f \mapsto f^{p^2} \left( x^2(x-1)^2(x-t_1)(x-t_2) \right)^{A}.
\end{equation}

In the \u{C}ech description, $H^1(\PP^1,\cO_{\PP^1}(-2))$
consists of the ring of functions $k[x,x^{-1}]$, which are regular except at $0$ and infinity, 
modulo the functions in $k[x]$ (which are regular except at infinity), 
and modulo the functions in $x^{-2} k[x^{-1}]$ (which are regular except at $0$ and have at least a double zero at infinity).
In this quotient, the equivalence class of $x^{-1}$ is non-zero and hence a basis element.  
Thus, by \eqref{Emapf}, $\varphi(x^{-1})$ is the equivalence class represented by $B_p(t_1,t_2) x^{-1}$,
where $B_p(t_1,t_2)$ is the coefficient of $x^{-1}$ in 
$x^{-p^2} \left( x^2(x-1)^2(x-t_1)(x-t_2) \right)^{A}$. 
Simplifying, $B_p(t_1,t_2)$ equals the coefficient of $x^{(p^2-1)/2} = x^{2A}$ in \eqref{EdefBp}.
Thus $C_1(t_1,t_2)$ is supersingular if and only if $\varphi$ is the zero map if and only if $b_p(t_1,t_2)=0$.
\end{proof}

\begin{remark}
Proposition~\ref{prop:m8ss} can also be proven by describing the Cartier operator on $H^0(C_1,\Omega^1_{C_1})$
using \cite{Elkin}, although the computations would be more complicated with that approach.
\end{remark}

\section{Construction of the supersingular curve} \label{Section3}

We construct a supersingular curve of genus five as an unramified double cover of a supersingular curve in the $M[8]$ family when 
$p \equiv 3 \bmod{4}$.  This requires a specific choice of parameters.

\subsection{Constructing Double Covers}

Let $p$ be an odd prime.  Let $X=C_1$ be the curve in \eqref{EM8family} and 
let $\TAU : C_1 \to \PP^1$ be the projection map onto the $x$-coordinate.  

\begin{notation} \label{Nsetup}
Let $h_1 : D_1 \to \PP^1$ be the cover given by $w^2 = x$, with $h_1$ being projection onto the $x$-coordinate.
Let $\pi : Y \to C_1$ be the pullback of $h_1$ by $\TAU$.
\end{notation}

Note that $D_1$ has genus $0$ and $h_1$ is ramified over $0$ and infinity.  
We write $Y(t_1,t_2)$ for $Y$ when we want to denote the dependence of $Y$ on the parameters $t_1$ and $t_2$, 

\begin{lemma} \label{Lconstructcover}
The cover $\pi: Y \to C_1$ is an unramified double cover, and $Y$ has genus $5$.
\end{lemma}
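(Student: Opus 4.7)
The plan is to check that $\pi$ is unramified by a local computation at the branch locus of $h_1$, and then apply Riemann--Hurwitz. Because $\pi$ is a pullback, it has degree $\deg(h_1) = 2$, and it can only ramify at points $P \in C_1$ with $\TAU(P) \in \{0,\infty\}$; elsewhere $h_1$ is \'etale, so $\pi$ is \'etale by base change.

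The main step is to match the inertia of $\TAU$ with that of $h_1$ above $0$ and $\infty$. For a cyclic cover $y^m = f(x)$, the ramification index at a finite branch point where $f$ vanishes to order $d$ is $m/\gcd(m,d)$, and at infinity it is $m/\gcd(m, \deg f)$. Applied to \eqref{EM8family} with $m = 4$, the exponent of $x$ on the right is $2$, giving ramification index $4/\gcd(4,2) = 2$ at the points of $C_1$ above $0$; the total degree is $6$, giving $4/\gcd(4,6) = 2$ above $\infty$. This is consistent with the inertia type $(1,1,2,2,2)$ from \cite[Table~2]{moonen}, with the three ``$2$''s corresponding to the branch points $0$, $1$, and $\infty$. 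Since $h_1 : D_1 \to \PP^1$ also has ramification index $2$ above each of $0$ and $\infty$, the indices match.

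When two covers have matching ramification indices $e = f = 2$ over a common base point, their local fiber product splits, as $k[[u]] \otimes_{k[[x]]} k[[w]] \cong k[[u,w]]/(u^2 - w^2) \cong k[[u]] \times k[[u]]$, producing two unramified preimages. Thus $\pi$ is \'etale above each point of $C_1$ over $\{0,\infty\}$ and hence globally, $Y$ is smooth, and Riemann--Hurwitz yields $2g(Y) - 2 = 2\bigl(2g(C_1) - 2\bigr) = 8$, so $g(Y) = 5$. The only real obstacle is the inertia-matching step: had the ramification indices of $\TAU$ and $h_1$ disagreed at $0$ or $\infty$, the pullback would have picked up genuine ramification and both the conclusion and much of the subsequent analysis would fail. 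The inertia-$2$ behavior of the $M[8]$ family at $0$, $1$, $\infty$ is precisely what enables the construction.
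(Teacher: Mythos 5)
Your overall strategy is the one the paper uses: the printed proof simply invokes Abhyankar's Lemma for the tame ramification of $h_1$ and $\TAU$ above $x=0$ and $x=\infty$, and your ramification-index computations ($4/\gcd(4,2)=2$ above $0$ and $4/\gcd(4,6)=2$ above $\infty$ for the cover \eqref{EM8family}, matching the inertia type $(1,1,2,2,2)$) correctly verify the hypotheses of that lemma. The Riemann--Hurwitz step is also fine.

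However, the key local claim is false as stated: $k[[u,w]]/(u^2-w^2)$ is \emph{not} isomorphic to $k[[u]]\times k[[u]]$. The two factors $u-w$ and $u+w$ generate the ideal $(u,w)$, which is the maximal ideal rather than the unit ideal, so the Chinese Remainder Theorem does not apply; this ring is the complete local ring of a node (two smooth branches crossing at a point), i.e.\ the subring of $k[[u]]\times k[[u]]$ of pairs agreeing at the origin. Concretely, the literal fiber product $C_1\times_{\PP^1}D_1$ is a nodal curve at each point lying over $x=0$ and $x=\infty$, and the projection to $C_1$ is \emph{not} unramified there: its scheme-theoretic fiber is $\Spec k[w]/(w^2)$, which is non-reduced. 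The splitting you want only appears after normalization: the normalization of $k[[u,w]]/((u-w)(u+w))$ is $k[[u]]\times k[[u]]$, with each branch mapping isomorphically onto $k[[u]]$, and it is this normalized fiber product that is the curve $Y$ of Notation~\ref{Nsetup} and that Abhyankar's Lemma describes. So you should either insert the normalization step explicitly (noting that the two branches separate because $\mathrm{char}(k)\neq 2$ makes $u^2-w^2$ factor into distinct smooth branches) or cite Abhyankar's Lemma as the paper does, since it packages exactly this. With that repair, your argument --- matching indices, \'etaleness away from $\{0,\infty\}$ by base change, and Riemann--Hurwitz giving $2g(Y)-2=2(2\cdot 3-2)=8$ --- is complete.
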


\begin{proof}
The first statement follows from Abhyankar's Lemma, because the covers $h_1$ and $\TAU$ 
both have inertia groups
of order two above $x=0$, and above $x=\infty$.
The second statement follows from the Riemann--Hurwitz formula.
\end{proof}

\begin{notation}
Let $E_1$ and $E_2$ be the elliptic curves given by the following Weierstrass equations:
\begin{equation} \label{EdefineE1E2}
E_1:y^2=x(x-t_1)(x-t_2) \mbox{ and } E_2:y^2 =x(x-r)(x+r) \mbox{ where } r = \sqrt{(1-t_1)/(1-t_2)}.
\end{equation}
\end{notation}

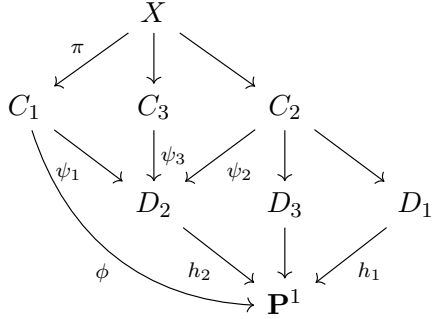
\begin{figure}[th]
\[\begin{tikzcd}
	& Y \\
	{C_1} & {C_3} & {C_2} \\
	& {D_2} & {D_3} & {D_1} \\
	&& {\mathfont{P}^1}
	\arrow["\pi"', from=1-2, to=2-1]
	\arrow[from=1-2, to=2-2]
	\arrow[from=1-2, to=2-3]
	\arrow["{\psi_1}"', from=2-1, to=3-2]
	\arrow["\TAU"', curve={height=30pt}, from=2-1, to=4-3]
	\arrow["{\psi_3}", from=2-2, to=3-2]
	\arrow["{\psi_2}", from=2-3, to=3-2]
	\arrow[from=2-3, to=3-3]
	\arrow[from=2-3, to=3-4]
	\arrow["{h_2}"', from=3-2, to=4-3]
	\arrow[from=3-3, to=4-3]
	\arrow["{h_1}", from=3-4, to=4-3]
\end{tikzcd}\]
\caption{The curves and morphisms appearing in the proof of Proposition~\ref{proposition:prym description}}
\label{curvediagram}
\end{figure}

\begin{proposition} \label{proposition:prym description}
With notation as in \eqref{EdefineE1E2}, 
there is an isogeny $\mathrm{Jac}(Y) \sim \mathrm{Jac}(C_1) \times E_1 \times E_2$.  
In particular, the Prym of $\pi : Y \to C_1$ is isogenous to $E_1 \times E_2$.  
The degree of each of these isogenies is a power of $2$.
\end{proposition}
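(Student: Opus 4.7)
The plan is to realize the diagram of Figure~\ref{curvediagram} as a $(\ZZ/4\ZZ \times \ZZ/2\ZZ)$-Galois tower over $\PP^1$ and decompose $\Jac(Y)$ using an idempotent relation of Kani--Rosen type on its Klein-four subtower. First, I would verify that $Y \to \PP^1$ is Galois with group $G$ generated by $\sigma_1: y \mapsto iy$ (extending the $\ZZ/4\ZZ$-action on $C_1$) and $\sigma_2: w \mapsto -w$ (extending the hyperelliptic involution on $D_1$); this reduces to the linear disjointness of $k(C_1)$ and $k(D_1)$ over $k(\PP^1)$, which holds generically. Short invariant-theory computations then identify the labelled curves as quotients of $Y$:
\[
D_2 = Y/\langle \sigma_1^2,\sigma_2\rangle,\quad C_1 = Y/\langle \sigma_2\rangle,\quad C_2 = Y/\langle \sigma_1^2\rangle,\quad C_3 = Y/\langle \sigma_1^2\sigma_2\rangle,
\]
with affine equations $D_2 : u^2=(x-t_1)(x-t_2)$ (so $D_2$ has genus $0$); $C_2 : v^2 = (w^2-t_1)(w^2-t_2)$ for $v = y^2/(w^2(w^2-1))$; and $C_3 : v'^4 = (x-1)^2(x-t_1)(x-t_2)$ for $v' = y/w$.

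The subgroup $H := \langle \sigma_1^2,\sigma_2\rangle \cong (\ZZ/2\ZZ)^2$ makes $Y \to D_2$ a Klein-four cover whose three intermediate double covers are precisely $C_1, C_2, C_3$. The standard idempotent relation
\[
e_{\langle \sigma_2\rangle} + e_{\langle \sigma_1^2\rangle} + e_{\langle \sigma_1^2\sigma_2\rangle} = \mathbf{1} + 2\, e_H
\]
in $\mathbb{Q}[H]$ then gives, by the Kani--Rosen theorem, an isogeny of degree a power of $2$:
\[
\Jac(C_1) \times \Jac(C_2) \times \Jac(C_3) \sim \Jac(Y) \times \Jac(D_2)^2.
\]
Since $\Jac(D_2) = 0$, this reduces to $\Jac(Y) \sim \Jac(C_1) \times \Jac(C_2) \times \Jac(C_3)$.

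It remains to identify $\Jac(C_2)$ and $\Jac(C_3)$ with the claimed elliptic curves. For $C_2$, the degree-$2$ map $(w,v) \mapsto (w^2, wv)$ realizes $C_2$ as a double cover of $E_1: Y^2 = X(X-t_1)(X-t_2)$, so $\Jac(C_2)$ is $2$-isogenous to $E_1$. For $C_3$, I would realize it as the double cover $v'^2 = (x-1)u$ of the rational curve $u^2 = (x-t_1)(x-t_2)$, parametrize the base by $u = (x-t_1)s$ (giving $x = (t_1 s^2 - t_2)/(s^2-1)$), and then apply the substitution $s = 1/S$ together with a rescaling of $v'$ to arrive at an equation of the form $W^2 = c \cdot S(S-r)(S+r)$ with $r = \sqrt{(1-t_1)/(1-t_2)}$ and some nonzero $c \in k$. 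Since $k$ is algebraically closed the twist by $c$ is trivial, so $C_3 \cong E_2$. Combining yields $\Jac(Y) \sim \Jac(C_1) \times E_1 \times E_2$ via an isogeny of degree a power of $2$; because $\pi: Y \to C_1$ is étale, the Prym of $\pi$ is correspondingly isogenous to $E_1 \times E_2$.

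The main obstacle is the explicit change of variables identifying $C_3$ with $E_2$: neither the parameter $r = \sqrt{(1-t_1)/(1-t_2)}$ nor the form of $E_2$ is visible from the raw equation $v'^4 = (x-1)^2(x-t_1)(x-t_2)$, and extracting it requires both a judicious rational parametrization of the genus-$0$ curve $D_2$ and a well-chosen substitution in the resulting cover. The Kani--Rosen step, by contrast, is essentially formal once the Klein-four structure on $Y \to D_2$ has been pinned down.
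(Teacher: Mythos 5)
Your proof is correct and follows essentially the same route as the paper: both hinge on the Kani--Rosen idempotent relation for the Klein-four cover $Y \to D_2$ with intermediate quotients $C_1$, $C_2$, $C_3$ (killing the $\Jac(D_2)^2$ factor since $D_2$ has genus $0$), followed by identifying $\Jac(C_2)$ with $E_1$ and $C_3$ with $E_2$. The only cosmetic differences are that you identify $E_1$ via the explicit degree-two map $(w,v)\mapsto(w^2,wv)$ where the paper instead applies Kani--Rosen a second time to the Klein-four cover $C_2 \to \PP^1$, and you pin down $E_2$ by an explicit rational parametrization of $D_2$ where the paper reads it off from the branch locus $z=0,\infty,\pm r$ in the coordinate $z$ satisfying $z^2=(x-t_1)/(x-t_2)$.
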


\begin{proof}
The reader may find it helpful to reference Figure~\ref{curvediagram}.  
The $\Z/4\Z$-cover $\TAU$ factors as a composition of degree two covers $\psi_1 : C_1 \to D_2$ and $h_2:  D_2 \to \PP^1$.
Above each of $x=0,1,\infty$, the curve $C_1$ has two points, with inertia group of order $2$.
Above each of $x=t_1, t_2$, the curve $C_1$ has one point, with inertia group of order $4$.  
Thus $h_2$ is ramified only over $t_1$ and $t_2$, so $D_2$ has genus $0$.  

Consider the fiber product of $h_1 : D_1 \to \PP^1$ and $h_2 : D_2 \to \PP^1$.  
It is a Klein-$4$ cover $C_2 \to \PP^1$, and factors through a third double cover $D_3 \to \PP^1$.  Note that this third cover is ramified over $0,\infty,t_1,t_2$, the set of points over which exactly one of $h_1$ and $h_2$ is ramified.  Thus $D_3$ is isomorphic to the elliptic curve $E_1$.  The Kani-Rosen theorem \cite[Theorem~B]{kanirosen} shows that $\Jac(C_2)$ is isogenous to $E_1$, 
since $D_1$ and $D_2$ have genus $0$.

Let $\psi_2 : C_2 \to D_2$ be the pullback of $h_1$ by $h_2$.  The fiber product of $\psi_1 : C_1 \to D_2$ and $\psi_2 : C_2 \to D_2$ is a Klein-$4$ cover $Y \to D_2$.  It factors through a third double cover $\psi_3 : C_3 \to D_2$.  Note that $\psi_1$ is ramified over eight points: the unique point of $D_2$ above each of $t_1, t_2 \in \PP^1$ and the two points of $D_2$ above each of $0, 1 , \infty \in \PP^1$.  Furthermore, $\psi_2$ is ramified at four points: the two points of $D_2$ above $0$ and $\infty$.  Thus $\psi_3$ is branched over four points of $D_2$: the two points above $1 \in \PP^1$ and the one point above each of $t_1, t_2 \in \PP^1$.   Thus $C_3$ has genus one.  By the Kani-Rosen theorem \cite[Theorem~B]{kanirosen}, we conclude that
$\Jac(Y) \sim \Jac(C_1) \times \Jac(C_2) \times \Jac(C_3)$.

To find an explicit equation for $C_3$, we note that an affine equation for $h_2 : D_2 \to \PP^1$ is 
$z^2 = (x-t_1)(x-t_2)^{-1}$.  The function $z$ identifies $D_2$ with $\mathfont{P}^1$, and the points of $D_2$ above $t_1,t_2,1 \in \PP^1$ correspond to $z = 0, \infty, \pm r$, respectively.  Thus $C_3$ is isomorphic to $E_2$.  Thus
\[
\Jac(Y) \sim \Jac(C_1) \times E_1 \times E_2.
\]

The final statement follows because each decomposition in the Kani-Rosen theorem 
arises from an idempotent relation and the constants in the idempotent relation are powers of $2$.
\end{proof}

\subsection{The supersingular locus}

\begin{proposition} \label{prop:supersingularequivalent}
Let $p \equiv 3 \bmod 4$.  Recall Notation~\ref{Nsetup}.  Then the following are equivalent:
\begin{enumerate}[(i)]
\item  \label{i} the genus $5$ curve $Y(t_1,t_2)$ is supersingular;
\item  \label{ii} the genus $3$ curve $C_1(t_1,t_2)$ and the Prym of $\pi$ are both supersingular;
\item  \label{iii} the curves $C_1(t_1,t_2)$ and $E_1(t_1,t_2)$ are both supersingular; and
\item  \label{iv} $H_p(t_2/t_1)=0$ and $b_p(t_1,t_2)=0$, with $H_p(\lambda)$ as in \eqref{EHplambda} and $b_p(t_1,t_2)$ as in Definition~\ref{Dpolyb}.
\end{enumerate}
\end{proposition}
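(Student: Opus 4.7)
My plan is to prove the four conditions are equivalent by establishing $(i) \Leftrightarrow (ii)$, $(ii) \Leftrightarrow (iii)$, and $(iii) \Leftrightarrow (iv)$ in turn, leveraging the isogeny decomposition from Proposition~\ref{proposition:prym description} together with the Hasse--Witt computations already assembled.

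For $(i) \Leftrightarrow (ii)$, I would note that since $\pi : Y \to C_1$ is unramified of degree two, pullback along $\pi$ embeds $\Jac(C_1)$ into $\Jac(Y)$ with complement the Prym $P$, giving an isogeny $\Jac(Y) \sim \Jac(C_1) \times P$. Supersingularity is preserved under isogeny, and a product of abelian varieties is supersingular if and only if each factor is, so the equivalence follows from the definition of supersingularity of a curve.

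For $(ii) \Leftrightarrow (iii)$, Proposition~\ref{proposition:prym description} already gives $P \sim E_1 \times E_2$, so $P$ is supersingular if and only if both $E_1$ and $E_2$ are. The key observation is that $E_2$ is \emph{automatically} supersingular when $p \equiv 3 \bmod 4$: the substitution $x = rx'$ (together with an appropriate rescaling of $y$) transforms $E_2 : y^2 = x(x-r)(x+r)$ into $y'^2 = x'^3 - x'$, which has $j$-invariant $1728$ and complex multiplication by $\mathfont{Z}[i]$. Classically, this curve is supersingular precisely when $p$ is inert in $\mathfont{Z}[i]$, namely when $p \equiv 3 \bmod 4$. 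Thus under our hypothesis the supersingularity of $P$ reduces to the supersingularity of $E_1$ alone.

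For $(iii) \Leftrightarrow (iv)$, I would invoke Proposition~\ref{prop:m8ss} for $C_1(t_1,t_2)$, and for $E_1 : y^2 = x(x-t_1)(x-t_2)$ scale via $x \mapsto t_1 x$ (and rescale $y$) to put it in Legendre form $y^2 = x(x-1)(x - t_2/t_1)$, then apply Example~\ref{example:elliptic curve} with $\lambda = t_2/t_1$. All steps should be short once the framework is in place; the subtlest point is the automatic supersingularity of $E_2$, and this is what makes the statement clean and ultimately helps explain why the family $M[8]$ is so well-suited to producing supersingular genus-five curves in the regime $p \equiv 3 \bmod 4$.
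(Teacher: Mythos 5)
Your proposal is correct and follows essentially the same route as the paper's proof: $(i)\Leftrightarrow(ii)$ via the isogeny $\Jac(Y)\sim\Jac(C_1)\times P$ from Proposition~\ref{proposition:prym description}, $(ii)\Leftrightarrow(iii)$ via the observation that $E_2$ is isomorphic to $y^2=x^3-x$ and hence automatically supersingular when $p\equiv 3\bmod 4$, and $(iii)\Leftrightarrow(iv)$ via Proposition~\ref{prop:m8ss} together with Example~\ref{example:elliptic curve} applied to $E_1$ in Legendre form $y^2=x(x-1)(x-t_2/t_1)$. No gaps; your added detail about the change of variables $x=rx'$ and the CM characterization of supersingularity for $j=1728$ simply makes explicit what the paper asserts in one line.
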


\begin{proof}
Proposition~\ref{proposition:prym description} shows that (\ref{i}) and (\ref{ii}) are equivalent.  We observe that $E_2$ is isomorphic to the elliptic curve given by $y^2 = x^3-x$, which is supersingular as $p \equiv 3 \bmod 4$, so (\ref{ii}) and (\ref{iii}) are equivalent.  Proposition~\ref{prop:m8ss} shows that $C_1$ is supersingular if and only if $b_p(t_1,t_2)=0$.  Writing $E_1$ in Legendre form, as $y^2 = x(x-1)(x-t_2/t_1)$, and using Example~\ref{example:elliptic curve} completes the proof.
\end{proof}

\begin{remark}
Using the techniques of Section~\ref{ss:tamagawa}, one can show 
that the $p$-rank of the Prym of $\pi$ is $0$ if and only if $c_p(t_1,t_2)=0$, 
where $c_p(t_1,t_2)$ is the coefficient of $x^{(p-1)/2}$ in $((x-t_1)(x-t_2))^{(p-1)/2}$.  
Simplifying shows that $c_p(t_1,t_2)= 0$ if and only if $H_p(t_2/t_1)=0$.  
\end{remark}

For the rest of the section, 
we investigate whether condition (iv) of Proposition~\ref{prop:supersingularequivalent}
is satisfied for a choice of $(t_1,t_2)$ such that $t_1 \neq t_2$ 
and $t_1,t_2 \neq 0,1$.  The curves $X=C_1(t_1,t_2)$ and $Y(t_1,t_2)$ are smooth if and only if these conditions are met.

\begin{example}
Let $p=23$.  The intersection of $b_p(t_1,t_2)=0$ and $t_1^{(p-1)/2} H_p(t_2/t_1) = c_p(t_1,t_2)=0$ in the $(t_1,t_2)$ plane 
contains the following $\FF_{p}$-points:
\begin{equation*}
\{ (5, 19),  (10, 7), (20, 13), (17, 14), (16, 15), 
 (13, 20), (19, 5), 
(15, 16), (7, 10), (14, 17), (1, 22), (22,1) \}
\end{equation*}
For all pairs $(t_1, t_2)$ except the last two, there is an unramified double cover $Y \to C_1(t_1,t_2)$ 
such that $Y$ is a smooth supersingular curve of genus $5$. 
We note that the intersection of the varieties $V(b_p)$ and $V(c_p)$ may not be transverse.  
For example, we computed using Magma \cite{magma} that the local intersection multiplicity at $(t_1,t_2) =(1,22)$ is $6$.
\end{example}

\begin{lemma}
The polynomial $b_p(t_1,t_2)$ is symmetric under the transposition of $t_1$ and $t_2$ 
and has bidegree $(A,A)$, where $A = (p^2-1)/4$. 
Furthermore, 
\begin{equation} \label{Eforbp}
b_p(t_1,t_2)  = \sum_{u=0}^{2A} \sum_{(s_1,s_2)} \binom{2A}{u} \binom{A}{s_1} \binom{A}{s_2} t_1^{A-s_1} t_2^{A-s_2},
\end{equation}
where the inner sum is over pairs $(s_1,s_2)$ such that $s_1, s_2 \geq 0$ and $s_1+s_2 = 2A-u$. 
In particular, $b_p(t_1,t_2)$ has leading term $(t_1t_2)^A$ and constant term $1$.
\end{lemma}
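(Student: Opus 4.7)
The plan is to derive the explicit formula \eqref{Eforbp} directly from the defining expression in Definition~\ref{Dpolyb}, and then read off the other claims as immediate consequences. Write
\[
\bigl((x-1)^2(x-t_1)(x-t_2)\bigr)^A = (x-1)^{2A}(x-t_1)^A(x-t_2)^A,
\]
and expand each of the three factors by the binomial theorem. The coefficient of $x^{2A}$ in the product is then a triple sum over indices $u, s_1, s_2 \geq 0$ subject to the constraint $u + s_1 + s_2 = 2A$.

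The main computational step is tracking signs. Writing
\[
(x-1)^{2A} = \sum_{u=0}^{2A} \binom{2A}{u}(-1)^{2A-u} x^u, \qquad (x-t_j)^A = \sum_{s_j=0}^{A} \binom{A}{s_j}(-t_j)^{A-s_j} x^{s_j},
\]
the combined sign in each term is $(-1)^{(2A-u)+(A-s_1)+(A-s_2)} = (-1)^{4A - (u+s_1+s_2)}$, which equals $(-1)^{2A} = 1$ on the locus $u + s_1 + s_2 = 2A$. Solving this constraint for $u = 2A - s_1 - s_2$ and rewriting the sum with outer index $u$ and inner index $(s_1, s_2)$ produces exactly the formula \eqref{Eforbp}.

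With the formula in hand, the remaining assertions are immediate. Symmetry under $t_1 \leftrightarrow t_2$ follows because the inner sum is symmetric in the pair $(s_1, s_2)$. The bidegree bound $(A,A)$ follows because $A - s_j \leq A$ and $\binom{A}{s_j}$ vanishes for $s_j > A$. The monomial $t_1^A t_2^A$ arises uniquely from $(s_1, s_2) = (0,0)$, forcing $u = 2A$, with coefficient $\binom{2A}{2A}\binom{A}{0}^2 = 1$. The constant term arises uniquely from $(s_1, s_2) = (A, A)$, forcing $u = 0$, again with coefficient $\binom{2A}{0}\binom{A}{A}^2 = 1$.

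There is no real obstacle here: the argument is a routine binomial expansion. The only point requiring any care is the sign computation, which collapses cleanly once the degree constraint $u + s_1 + s_2 = 2A$ is imposed.
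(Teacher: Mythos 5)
Your proof is correct and follows essentially the same route as the paper's: expand $(x-1)^{2A}(x-t_1)^A(x-t_2)^A$ by the binomial theorem, collect the coefficient of $x^{2A}$, and read off symmetry, bidegree, leading term, and constant term from the resulting triple sum. The sign verification $(-1)^{4A-(u+s_1+s_2)}=1$ on the constraint locus is a slightly more explicit accounting than the paper gives, but the argument is the same.
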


\begin{proof}
By Definition~\ref{Dpolyb},
$b_p(t_1,t_2)$ is the coefficient of $x^{2A}$ in $(x-1)^{2A} (x-t_1)^{A} (x-t_2)^A$.  This is visibly symmetric in $t_1$ and $t_2$.  The formula in \eqref{Eforbp} follows 
from the binomial theorem, after collecting terms involving $x^{2A}$. 
Such terms arise as a product of a term involving $x^u$ from $(x-1)^{2A}$, a term involving $x^{s_1}$ from $(x-t_1)^A$, and a term involving $x^{s_2}$ from $(x-t_2)^A$.
The highest possible exponents on $t_1$ and $t_2$ occur when $s_1 = s_2=0$, in which case $u=2A$ and the leading term is 
$t_1^A t_2^A$.  The constant term occurs when $A = s_1 = s_2$ (so $u=0$), giving constant term $1$.
\end{proof}

\begin{lemma} \label{lemma:F(t)}
The polynomial $B(t) \colonequals b_p(t,-t)$ is an even polynomial of degree $(p^2-1)/2$ and has non-zero constant term. 
The coefficient of $t^2$ in $B(t)$ is congruent to $3/32$ modulo $p$. 
\end{lemma}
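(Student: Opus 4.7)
The plan is to derive a closed-form expression for $B(t)$ by using the observation that substituting $t_1 = t$, $t_2 = -t$ into $((x-1)^2(x-t_1)(x-t_2))^A$ collapses the two linear factors into a quadratic. All three claims then drop out from inspection of the formula.

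First I would rewrite $B(t)$ not from the expanded sum in \eqref{Eforbp} but directly from Definition~\ref{Dpolyb}: since $(x-t)(x+t)=x^2-t^2$, the polynomial $B(t)$ is the coefficient of $x^{2A}$ in
\[
\bigl((x-1)^2(x^2-t^2)\bigr)^{A} = (x-1)^{2A}(x^2-t^2)^{A}.
\]
Next I would apply the binomial theorem to $(x^2-t^2)^A = \sum_{k=0}^{A}\binom{A}{k}(-1)^{A-k}x^{2k}t^{2(A-k)}$, so that $B(t)$ becomes a sum over $k$ of $\binom{A}{k}(-1)^{A-k}t^{2(A-k)}$ times the coefficient of $x^{2A-2k}$ in $(x-1)^{2A}$. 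Since $(x-1)^{2A}$ contributes $\binom{2A}{2A-2k}(-1)^{2k}=\binom{2A}{2k}$, reindexing by $\ell = A-k$ gives the clean formula
\[
B(t) = \sum_{\ell=0}^{A}(-1)^{\ell}\binom{A}{\ell}\binom{2A}{2\ell}t^{2\ell}.
\]

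From this formula every claim is immediate. Only even powers of $t$ appear, so $B(t)$ is even. The top term ($\ell=A$) is $(-1)^{A}\binom{2A}{2A}t^{2A} = \pm t^{(p^2-1)/2}$, giving the asserted degree, and the $\ell=0$ term gives constant term $1$, which is nonzero modulo $p$. The coefficient of $t^2$ is the $\ell=1$ term, namely $-\binom{A}{1}\binom{2A}{2} = -A^{2}(2A-1)$.

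Finally I would reduce $-A^2(2A-1)$ modulo $p$. Since $A=(p^2-1)/4$, we have $A\equiv -1/4\pmod{p}$ and $2A-1\equiv -3/2\pmod{p}$, so $-A^2(2A-1)\equiv -(1/16)(-3/2)=3/32\pmod{p}$. There is no real obstacle here: the only substantive step is recognizing that the $(x-t_1)(x-t_2)$ factor becomes a polynomial in $x^2$ after the specialization, which turns the double binomial sum in \eqref{Eforbp} into a single sum that can be inspected term-by-term.
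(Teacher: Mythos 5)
Your proof is correct and follows essentially the same route as the paper: a binomial-theorem extraction of the coefficient of $x^{2A}$, arriving at the same value $-A^2(2A-1)$ for the $t^2$ coefficient and the same reduction $A\equiv -1/4 \pmod p$ giving $3/32$. Your one refinement --- specializing $t_1=t$, $t_2=-t$ first so that $(x-t)(x+t)=x^2-t^2$ collapses the double sum in \eqref{Eforbp} into the single sum $\sum_{\ell}(-1)^{\ell}\binom{A}{\ell}\binom{2A}{2\ell}t^{2\ell}$ --- is a pleasant simplification that makes evenness, the degree, and the constant term visible at a glance, but it does not change the substance of the argument.
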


\begin{proof}
As $b_p(t_1,t_2)$ is symmetric, it follows that $B(t)$ is an even function of $t$.  As the constant term of $b_p(t_1,t_2)$ is $1$, the constant term of $B(t)$ is also $1$.  The claim about the degree of $B(t)$ follows from the fact that $b_p(t_1,t_2)$ has leading term $(t_1 t_2)^{A}$.

Let $\delta$ be the coefficient of $t^2$ in $B(t) = b_p(t,-t)$.
The terms in the sum in \eqref{Eforbp} that contribute to $\delta$ occur when $u=2$ and $(s_1,s_2)$ is one of $(A-2,A)$, $(A-1,A-1)$, or $(A,A-2)$.  Thus
\begin{eqnarray*}
\delta & = &
\binom{2A}{2} \left( \binom{A}{A-2} \binom{A}{A} - {\binom{A}{A-1}}^2 + \binom{A}{A-2} \binom{A}{A} \right)\\
& = & A (2A-1) (A(A-1) - A^2)= - A^2 (2A-1),
\end{eqnarray*}
where $A = (p^2-1)/4 \equiv - 1/4 \bmod p$.  Simplifying the coefficient modulo $p$ gives the result.
\end{proof}

\begin{proposition} \label{prop:nontrivialsolution}
Let $p \equiv 3 \bmod 4$.  There exist $t_1,t_2 \in \overline{\FF}_p$ with $t_1, t_2 \neq 0, 1$ and $t_1 \neq t_2$ such that $b_p(t_1,t_2)=0$ and $H_p(t_2/t_1) =0$. 
\end{proposition}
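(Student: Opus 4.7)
The plan is to collapse the two-dimensional problem to a one-dimensional one by specializing $t_2 = -t_1$. Since $p \equiv 3 \bmod 4$, the elliptic curve $y^2 = x(x-1)(x+1)$ is supersingular, so by Example~\ref{example:elliptic curve} the Hasse polynomial satisfies $H_p(-1) = 0$. Thus along the line $t_2 = -t_1$, the condition $H_p(t_2/t_1) = 0$ holds automatically whenever $t_1 \neq 0$. The proposition then reduces to exhibiting a single $t_1 \in \overline{\FF}_p$ with $B(t_1) = b_p(t_1, -t_1) = 0$ and $t_1 \notin \{0, 1, -1\}$; these three exclusions correspond precisely to the requirements $t_1 \neq t_2$, $t_1 \neq 1$, and $t_2 \neq 1$.

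I would then rule out each of the three forbidden values in turn. Lemma~\ref{lemma:F(t)} immediately gives $B(0) = 1 \neq 0$, so $0$ is not a root. To produce a root different from $\pm 1$, I would argue by contradiction: suppose every root of $B$ lies in $\{\pm 1\}$. Since $B$ is even of degree $2A$, with $A = (p^2-1)/4$, the multiplicities of $+1$ and $-1$ as roots must be equal, so $B(t) = c(t^2-1)^A$ for some $c \in \FF_p^\times$.

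The next step is to extract a contradiction by matching two coefficients. The constant term condition $c(-1)^A = 1$ forces $c = (-1)^A$, after which the coefficient of $t^2$ in $c(t^2-1)^A$ evaluates to $c \cdot A \cdot (-1)^{A-1} = -A$, which modulo $p$ equals $1/4$ since $A \equiv -1/4 \bmod p$. By Lemma~\ref{lemma:F(t)} this coefficient equals $3/32 \bmod p$, so $1/4 \equiv 3/32 \pmod p$, forcing $p \mid 5$. But $p \equiv 3 \bmod 4$ rules this out, so the supposition fails and $B$ has a root $t_1 \in \overline{\FF}_p$ outside $\{0, \pm 1\}$; then $(t_1, -t_1)$ is the desired pair.

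The only substantive obstacle, conceptually, is that last step: ruling out the degenerate scenario in which the roots of $B$ happen to be concentrated at $\pm 1$. The preparatory Lemma~\ref{lemma:F(t)}, and in particular its identification of the coefficient of $t^2$, is tailored exactly to block this pathology, so once that lemma is in place the rest of the argument is routine bookkeeping.
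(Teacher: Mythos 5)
Your proposal is correct and follows essentially the same route as the paper's proof: specialize to $t_2=-t_1$ using $H_p(-1)=0$, observe $B(0)=1$, and rule out the degenerate case $B(t)=c(t^2-1)^A$ by comparing the coefficient of $t^2$ ($1/4$ versus $3/32$ from Lemma~\ref{lemma:F(t)}), which forces $p=5$ and contradicts $p\equiv 3\bmod 4$. The only cosmetic difference is that you pin down the leading constant $c$ via the constant term, whereas the paper reads it off from the leading term of $b_p$.
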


\begin{proof}
We restrict to a choice of parameters where $t_1 = -t_2$ because, when $p \equiv 3 \bmod 4$, the elliptic curve $y^2 = x (x-t)(x+t)$ is supersingular, which is reflected in the fact that $-1$ is a root of $H_p(\lambda)$.  
It thus suffices to find $t \in \overline{\F}_p$ such that $B(t):=b_p(t,-t)=0$ and $t \neq 0, \pm 1$.

By Lemma~\ref{lemma:F(t)}, $B(t)$ has $(p^2-1)/2$ roots counted with multiplicity.  
Because its constant term is non-zero, $B(t)$ does not have a root when $t=0$.  

Assume that the only roots of $B(t)$ are $1$ and $-1$. 
Since $B(t)$ is even of degree $(p^2-1)/2$, this implies that $B(t) = (1-t^2)^{(p^2-1)/4}$.  
Then the coefficient of $t^2$ in $B(t)$ is $- \frac{p^2-1}{4} \equiv \frac{1}{4} \bmod p$.  
But $\frac{1}{4} \not \equiv \frac{3}{32} \bmod p$ unless $p=5$, and by hypothesis $p \equiv 3 \bmod{4}$.  
This contradicts 
Lemma~\ref{lemma:F(t)}, so $B(t)$ has a root $t_\circ$ other than $\pm 1$.  
Taking $(t_1,t_2) = (t_\circ,-t_\circ)$ completes the proof.
\end{proof}

We can now prove our main result.

\begin{proof}[Proof of Theorem~\ref{Tintro}]
Let $p \equiv 3 \bmod 4$.  By Proposition~\ref{prop:nontrivialsolution}, there exist $t_1,t_2 \in \overline{\FF}_p$ with $t_1, t_2 \neq 0, 1$ and $t_1 \neq t_2$ such that $b_p(t_1,t_2)=0$ and $H_p(t_2/t_1) =0$. 
Under these restrictions on $t_1$ and $t_2$, the curve $C_1(t_1,t_2)$ with affine equation in \eqref{EM8family} is a smooth 
projective connected curve over $\bar{\mathfont{F}}_p$ of genus $3$.
By Lemma~\ref{Lconstructcover}, there is an unramified double cover $\pi: Y(t_1,t_2) \to C_1(t_1,t_2)$, and so $Y(t_1,t_2)$ 
is a smooth projective curve of genus $5$.
By Proposition~\ref{prop:supersingularequivalent}, $Y(t_1,t_2)$ is supersingular.
\end{proof}

\begin{remark}
The $a$-number of a curve $C$ is $\mathrm{dim}_k \mathrm{Hom}(\alpha_p, \Jac(C))$,
where $\alpha_p$ is the kernel of Frobenius on $\mathfont{G}_a$.
The $a$-number of the supersingular curve $Y$ in Theorem~\ref{Tintro} is at least $3$, because the degree of
the isogeny in Proposition~\ref{proposition:prym description} is a power of $2$ and $\Jac(C_1)$, $E_1$, and $E_2$ each have
$a$-number at least $1$.
\end{remark}
 
\section{Computational Results} \label{sec:computational}

In this section, we provide computational evidence for Conjecture~\ref{conjecture} in the form of Theorem~\ref{Tcomputation}.  
The strategy is to choose a supersingular abelian surface $P$, 
to study smooth quartic plane curves $X$ of genus $3$ that have an unramified double cover $\pi : Y \to X$ whose Prym is $P$, 
and to search for such a curve $X$ which is supersingular.  

\subsection{Constructing Covers with Specified Prym} \label{SspecifiedPrym}

We learned this material from \cite[Section~7]{Bruin}.  Let $P$ be a p.p.\ abelian surface over $k$.  
Let $K$ be its Kummer surface, which is the quotient of $P$ by $[-1]$. 
 Let $\phi : P \to K$ be the degree two quotient map.  
Then $K$ can be embedded as a quartic surface in $\PP^3$, with $16$ singularities which are the images under $\phi$ of the 
$2$-torsion points of $P$.  Note that $\phi$ is ramified over these $16$ points and unramified elsewhere.

For a general plane $V$ in $\PP^3$, the intersection $X = K \cap V$ is a smooth quartic plane curve
of genus $3$.  As long as $X$ does not contain any of the singularities of $K$, then the 
restriction of $\phi$ to $Y := \phi^{-1}(X)$ is an unramified double cover $\pi:Y \to X$. 
Now $P$ is the Prym variety of $\pi$ by \cite[page~616]{Beauville}, so $\Jac(Y)$ is isogenous to $\Jac(X) \times P$.
In fact, every smooth quartic plane curve $X$ having an unramified double cover 
$\pi:Y \to X$ with Prym $P$ arises by this construction \cite[Corollary~4.1]{Verra}. 
Thus, by varying the plane $V$ and the abelian surface $P$, 
we can construct all unramified double covers $\pi : Y \to X$ of genus $3$ quartic plane curves.

Since $\mathfont{Jac}(Y) \sim \mathfont{Jac}(X) \times P$, the curve $Y$ will be supersingular if and only if 
both $P$ and $X$ are supersingular.
Thus we choose $P$ to be a supersingular p.p.\ abelian surface and 
search for a plane $V$ for which $X=K \cap V$ is also supersingular. 
As further explained in Rationale~\ref{R2}, it is reasonable to expect this to work as:
\begin{itemize}
\item  the supersingular locus in $\mathcal{A}_2$ has dimension $1$;
\item  the moduli space of planes in $\PP^3$ has dimension $3$;
\item and the codimension of the supersingular locus in $\mathcal{M}_3$ is $4$.
\end{itemize}

It is difficult to turn this into a rigorous argument due to the existence of singular examples as will be discussed in Section~\ref{ss:singularfamilies}.  But the idea is the basis for our computational search.

\subsection{Searching for Supersingular Curves}

To implement this idea as an algorithm, we express $P$ as the Jacobian of a smooth curve $Z$ of genus two.  
The projective curve $Z$ has an affine equation of the form $y_1^2=D(x_1)$
for a separable polynomial $D(x_1) =\sum_{i=0}^6 d_i x_1^i$ of degree $6$.
(This may require a change of variables so that the cover $Z \to \mathfont{P}^1_{x_1}$ is not branched at infinity).
By \cite[(3.1.8)]{CasselsFlynn}, a projective model of the
Kummer surface $K$ in $\mathfont{P}^3$
is the zero locus of the equation
\[\kappa(x, y, z, w) = K_2w^2+ K_1w + K_0, \quad \text{where}\]
\begin{eqnarray*}
K_2 & \colonequals & y^2-4xz, \\
K_1 & \colonequals & -2(2d_0x^3 + d_1x^2y + 2d_2x^2z + d_3xyz+2d_4xz^2+d_5yz^2 + 2d_6z^3), \text{\ and} \\
K_0 & \colonequals & (d_1^2-4d_0d_2)x^4 -4d_0d_3x^3y -2d_1d_3x^3z -4d_0d_4x^2y^2 \\
& & + 4(d_0d_5-d_1d_4)x^2yz + (d_3^2+2d_1d_5-4d_2d_4 - 4d_0d_6)x^2z^2 -4d_0d_5xy^3 \\
& & + 4(2d_0d_6 - d_1d_5)xy^2z + 4(d_1d_6 - d_2d_5)xyz^2 -2d_3d_5xz^3 -4d_0d_6y^4 \\
& & -4d_1d_6y^3z -4d_2d_6y^2z^2 -4d_3d_6yz^3 + (d_5^2-4d_4d_6)z^4.
\end{eqnarray*}
The singularities of $K$ are explicitly given in terms of the roots of $D(x)$ (see \cite[(3.1.14)]{CasselsFlynn} for the exact formulas).

We represent the plane $V$ in $\mathfont{P}^3$ by $v(x,y,z,w) = ax+by+cz + dw = 0$ for $[a,b,c,d] \in \mathfont{P}^3(k)$.

Using this description of $K$ and $V$, it is now feasible to search for instances where $P$ is supersingular and $X=K \cap V$ is supersingular using Magma \cite{magma}.  
For simplicity, we restrict the search to the case that $Z$ and $V$ are defined over the prime field $\mathfont{F}_p$.

\begin{proposition} \label{Pdata}
For each prime $p \equiv 1 \bmod{4}$ with $p< 100$,
consider the polynomial $D(x_1)$ and the linear polynomial $v=v(x,y,z,w)$ with coefficients in $\mathfont{F}_p$
in the row labeled by $p$ in Table~\ref{table:ss}.
Let $Z$ be the genus $2$ curve with affine equation $y_1^2=D(x_1)$.
Let $P = \mathfont{Jac}(Z)$, and let $K=P/[-1] \subset \PP^3$ be its Kummer surface.
Let $V \subset\PP^3$ be the plane $v(x,y,z,w)=0$, and let $X=K \cap V$.
Then 
\begin{enumerate}
\item $P$ is supersingular; 
\item $X$ is a supersingular smooth quartic plane curve; 
\item there is an unramified double cover $\pi : Y \to X$ whose Prym is $P$, 
and 
\item $Y$ is a supersingular smooth curve of genus $5$. 
\end{enumerate}

\begin{table}[htb]
\begin{tabular}{ |c|c|c| } 
 \hline
 \hspace{.5cm} $p$  \hspace{.5cm} & $D(x_1)$ & $v$ \\
 \hline 
 $5$ & $x_1^6 + x_1^5 + 2 $ & $ y + z + w  $ \\ 
 \hline 
 $13$ & $ 5x_1^6 + 5x_1^5 + 11x_1^4 + 4x_1^2 + x_1 + 4$ & $4x + y + 11z + w   $ \\ 
  \hline 
 $17$ & $ 15x_1^6 + 6x_1^5 + x_1^4 + 3x_1^3 + 3x_1^2 + 13x_1 + 3$ & $ 8x + y + 9z + w  $ \\ 
  \hline 
 $29$ & $ 21x_1^6 + 23x_1^5 + 6x_1^4 + 3x_1^3 + x_1^2 + 4x_1 + 17$ & $27x + 7y + 28z + w   $ \\ 
  \hline 
 $37$ & $ x_1^5 + 36$ & $ 6x + 6y + 4z + w  $ \\ 
  \hline 
 $41$ & $ 33x_1^6 + 33x_1^5 + 8x_1^4 + 21x_1^3 + 40x_1^2 + x_1 + 3 $ & $ 9x + 9y + 32z + w  $ \\ 
  \hline 
 $53$ & $  x_1^5 + 52$ & $ 6x + 4y + 8z + w  $ \\ 
  \hline 
 $61$ & $ 3x_1^6 + 32x_1^5 + 49x_1^4 + 11x_1^3 + 3x_1^2 + 30x_1 + 
    16 $ & $26y + 30z + w   $ \\ 
  \hline 
 $73$ & $ x_1^5 + 72$ & $29x + 23y + 44z + w   $ \\ 
  \hline 
 $89$ & $ x_1^6 + 28x_1^5 + 24x_1^4 + 57x_1^3 + 63x_1^2 + 11x_1 + 77$ & $ 7x + 15y + 47z + w  $ \\ 
  \hline 
 $97$ & $ 39x_1^6 + 26x_1^4 + 44x_1^3 + 7x_1^2 + 28x_1 + 52$ & $ 89x + 6y + 67z + w  $ \\ 
 \hline
\end{tabular}
\caption{Constructions of Supersingular Curves for $p < 100$, $p\equiv 1 \bmod{4}$} \label{table:ss}
\end{table}
\end{proposition}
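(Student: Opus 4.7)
The proof is a finite verification: for each of the $11$ primes in Table~\ref{table:ss}, one checks (1)--(4) directly. The plan is first to confirm supersingularity of $P = \Jac(Z)$ by computing the Cartier--Manin matrix of $Z$ from the affine equation $y_1^2=D(x_1)$ via the standard formula, whose entries are coefficients of prescribed monomials in $D(x_1)^{(p-1)/2}$. Supersingularity is then read off either by checking that the characteristic polynomial of Frobenius has the supersingular Newton polygon, or by counting points $\#Z(\FF_p)$ and $\#Z(\FF_{p^2})$; the entries in Table~\ref{table:ss} were drawn from the LMFDB list of supersingular p.p.\ abelian surfaces, so this step primarily certifies the transcription.

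Second, I would construct the Kummer quartic $K \subset \PP^3$ using the explicit polynomial $\kappa$ recalled above from \cite[(3.1.8)]{CasselsFlynn}. Since every $v$ in Table~\ref{table:ss} has coefficient $1$ on $w$, I would substitute $w = -(ax+by+cz)$ to obtain an explicit quartic $F(x,y,z) := \kappa(x,y,z,-(ax+by+cz))$ defining $X \subset V \cong \PP^2$. Smoothness of $X$ is verified by checking that the Jacobian ideal $(F, F_x, F_y, F_z)$ has no projective zero over $\overline{\FF}_p$; disjointness from the singular locus of $K$ is verified by plugging the $16$ singular points of $K$, given explicitly by \cite[(3.1.14)]{CasselsFlynn} in terms of the Weierstrass points of $Z$, into $v$ and checking each value is nonzero. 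By the discussion in Section~\ref{SspecifiedPrym} this produces the unramified double cover $\pi\colon Y \to X$ with Prym $P$, establishing (2) and (3).

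Third, supersingularity of $X$ is verified directly from $F$: one computes the Hasse--Witt matrix on a chosen basis of $H^1(X, \cO_X)$ (equivalently, the Cartier operator on $H^0(X, \Omega^1_X)$), then reads off enough of the characteristic polynomial of Frobenius to confirm the Newton polygon is supersingular; alternatively one counts points on $X$ over $\FF_{p^r}$ for $r \leq 3$ and invokes the functional equation. Combined with the isogeny $\Jac(Y) \sim \Jac(X) \times P$ from step two, this yields (4).

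The main obstacle is not the per-prime verification, which is routine in Magma once the table is supplied, but rather producing the table: one must search among the finitely many isomorphism classes of supersingular p.p.\ abelian surfaces over $\FF_p$ and, for each, over the $3$-dimensional family of planes $V$ through the associated $K$, for a pair $(Z,V)$ such that $X = K \cap V$ is simultaneously smooth, disjoint from the $16$ singularities of $K$, and supersingular. Since the supersingular locus in $\cM_3$ has codimension $4$ while the family of planes through a fixed $K$ is only $3$-dimensional, one expects at most finitely many successful $V$ per $K$, and there is no a priori reason that any succeeds over the prime field $\FF_p$ rather than over an extension; the content of the proposition is that such a pair nevertheless exists and has been exhibited for every $p \equiv 1 \bmod 4$ with $p < 100$.
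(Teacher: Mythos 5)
Your proposal is correct and follows essentially the same route as the paper: the paper's proof is likewise a finite Magma verification (code on github) that takes the supersingular $Z$ from the LMFDB, builds the Kummer quartic via the Cassels--Flynn formulas, checks that $X=K\cap V$ is a smooth quartic avoiding the $16$ nodes, invokes Section~\ref{SspecifiedPrym} for the cover, and certifies supersingularity of $X$ by first filtering on $p$-rank $0$ via the Cartier operator and then computing the L-polynomial. The only differences are cosmetic choices of equivalent computational tests (e.g.\ Jacobian ideal versus reduced/irreducible/arithmetic-genus-three for smoothness).
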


\begin{proof}
The code we wrote to search for examples is available on github \cite{github}.  
Using the LMFDB \cite{lmfdb}, for each prime $p$, 
we obtain a list of affine equations $y_1^2=D(x_1)$ for genus two curves $Z$ 
such that $P=\Jac(Z)$ is in a supersingular isogeny class of p.p.\ abelian varieties over $\mathfont{F}_p$.  
Given one of these, we compute its Kummer surface $K$. 
Then we search through planes $V$ defined over $\mathfont{F}_p$, and not containing any singularity of $K$, 
for which $X=K \cap V$ is a smooth quartic plane curve. 
We check the latter by verifying that $X$ is reduced, irreducible, and has arithmetic genus three.  
For each such pair $Z$ and $V$, by Section~\ref{SspecifiedPrym}, 
there exists an unramified double cover $\pi : Y \to X$ whose Prym is $P$.

We then use Magma to find cases when $X$ has $p$-rank $0$, which is a necessary condition for $X$ to be supersingular.
The $p$-rank is $0$ when the stable rank of $M$ is $0$, where $M$ is
a matrix representation for the Cartier operator on $H^0(X, \Omega^1)$.
(Since the coefficients of $M$ are in $\mathfont{F}_p$, having stable rank $0$ is conveniently equivalent 
to the condition that $M^3$ is the zero matrix.)  
For the cases satisfying this restrictive condition (of codimension three, geometrically),
we use Magma again to compute the L-polynomial of $X$ over $\mathfont{F}_p$ to check whether $X$ is supersingular.
\end{proof}  

\begin{remark}
Our goal in Proposition~\ref{Pdata} is simply to provide convincing evidence for Conjecture~\ref{conjecture} when $p\equiv 1\bmod{4}$.
So we did not attempt to maximize the range of primes $p$ checked or greatly optimize our code.  
The process does not inherently scale well in any case, because the number of planes to consider for each 
supersingular genus $2$ curve $Z$ is $O(p^3)$.
For example, when $p=97$, the search took around one day on a single processor of a desktop computer; 
the search was successful for the first curve $Z$ on the list from the LMFDB, 
and it ran through about $7\%$ of the possible planes $V$ before finding a supersingular genus $3$ curve 
$X=V \cap (\Jac(Z)/[-1])$.

We also considered a second computational approach to this problem by fixing $Z$ and considering a parametric representation 
$v: A x + By + C z + D w=0$ of $V$.
The Hasse--Witt matrix for $X=K \cap V$ can be determined from the coefficients of $(v\kappa)^{p-1}$, see \cite[Proposition~4.3]{WINEprym}.  
Our plan was to use a Gr\"{o}bner basis computation to express the $p$-rank $0$ condition on $X$ in terms of the parameters $A,B,C,D$.
This would let us restrict our search to planes $V$ for which $X$ has $p$-rank $0$, greatly reducing the time spent searching.  However, the entries of the Hasse--Witt matrix, as polynomials in $A,B,C,D$, have degree $p-1$, and the entries of its third iterate  
have degree $p^3-1 =(p-1)(p^2+p+1)$.  A preliminary exploration showed this approach was not feasible.
\end{remark}

\begin{example}
Working over $\mathfont{F}_5$, we construct a supersingular curve of genus $5$ 
which differs from the one found by Re \cite[Theorem~13]{re2009}.
The curve $Z : y_1^2 = x_1^6+x_1^5+2$ is supersingular
\cite[\href{https://www.lmfdb.org/Variety/Abelian/Fq/2/5/a_af}{Abelian variety isogeny class $2.5.a\_af$ over $\FF_{5}$}]{lmfdb}.
The Kummer surface $K$ has equation
\[
0 = \kappa = 2xy^3 + 2y^4 + 3x^2yz + xy^2z + 2x^2z^2 + z^4 + 2x^3w +
         3yz^2w + z^3w + y^2w^2 + xzw^2.\]
Let $V$ be the plane $y+z+w =0$.
Using Magma, we compute that 
$X=K \cap V$ is a smooth quartic plane curve which is supersingular.  
As $X$ does not contain any of the singularities of $K$, the restriction of $\phi : P \to K$ above $X$ is an unramified double cover 
$\pi:Y \to X$, where $Y$ is a smooth supersingular curve of genus $5$.

For this curve $Z$, there are six choices of plane $V$ with $a,b,c,d \in \FF_5$ for which $X=K \cap V$ is supersingular.  
This large number may be because
$\mathfont{Jac}(Z)$ is a twist 
of the Jacobian of the Artin--Schreier curve $y_1^2=x_1^5-x_1$
\cite[\href{https://www.lmfdb.org/Variety/Abelian/Fq/2/5/a_af}{Abelian variety isogeny class $2.5.a\_ak$ over $\FF_{5}$}]{lmfdb}.

The curve $Z' : y_1^2= x_1^5 + 3x_1$ is also supersingular, but there are no planes $V : ax + by + cz + dw =0$ with $a,b,c,d \in \FF_5$ for which $V \cap (\Jac(Z')/[-1])$ is supersingular.
\end{example}


\begin{example}
Working over $\mathfont{F}_{37}$,
the curve $Z : y_1^2 = x_1^5 -1$ is supersingular \cite[\href{https://www.lmfdb.org/Variety/Abelian/Fq/2/37/a_a}{Abelian variety isogeny class $2.37.a\_a$ over $\FF_{37}$}]{lmfdb}.  Another equation for $Z$ is $y_1^2 = x_1-x_1^6$.
Using the latter equation, we compute that the Kummer surface $K$ has equation
\[
0 = \kappa = x^4 + 4y^3z - 4xyz^2 - 2x^2yw + 4z^3w + y^2w^2 - 4xzw^2 =0.
\]
Let $X = K \cap V$ where $V: 6x + 6y + 4z + w=0$. 
As $X$ does not contain any of the singularities of $K$, the restriction of $\phi : P \to K$ above $X$ is an unramified double cover 
$\pi:Y \to X$, where $Y$ is a smooth supersingular curve of genus $5$.
\end{example}


\section{Rationale for the conjecture} \label{Srationale}

We provide several rationales for Conjecture~\ref{conjecture} and explain difficulties posed by singular curves.

\subsection{First Rationale}

\begin{notation} \label{Nmoduli}
For $g \geq 1$, let $\sigma_g$ denote the supersingular Newton polygon of height $2g$.
Let $\cA_g$ denote the moduli space of principally polarized abelian varieties of dimension $g$,
and let $\cA_g[\sigma_g]$ denote its supersingular locus.
For $g \geq 2$, let $\cM_g$ denote the moduli space of smooth curves
of genus $g$, and let $\cM_g[\sigma_g]$ denote its supersingular locus.
Let $\tau_g: \cM_g \to \cA_g$ be the Torelli morphism, which takes the isomorphism class of a curve of genus $g$
to the isomorphism class of its Jacobian.
\end{notation}

Suppose $X$ is a smooth curve of genus $g$.  
If $\pi: Y \to X$ is an unramified double cover, then $Y$ is a smooth curve of genus $2g-1$ and
$\Jac(Y)$ is isogenous to $\Jac(X) \times P$, where the Prym $P$ of $\pi$ is a p.p.\ abelian variety of dimension $g-1$.
So $Y$ is supersingular if and only if both $X$ and $P$ are supersingular.
Consider the following:
\begin{equation} \label{Econdition1}
\text{Condition~1: } \mathrm{dim}(\cM_g[\sigma_g]) \geq \mathrm{codim}(\cA_{g-1}[\sigma_{g-1}], \cA_{g-1}).
\end{equation}

When Condition~1 is true, the search for a supersingular smooth curve of genus $2g-1$ 
which is an unramified double cover $\pi:Y \to X$ of a curve $X$ of genus $g$ 
is more likely to be successful because of the purity theorem of de Jong and Oort \cite[Theorem~4.1]{JO00}.
However,  Condition~1 cannot be true for many $g$: 
the right hand side of \eqref{Econdition1} equals $g(g-1)/2 - \lfloor {(g-1)^2/4} \rfloor$ by \cite[Theorem in Section~4.9]{LiOort};
while the left hand side is bounded above by $2g-3$ (the dimension of the $p$-rank $0$ stratum of $\cM_g$) by \cite[Theorem~2.3]{FVdG}.

\begin{rationale} \label{R1}
The first rationale for Conjecture~\ref{conjecture} is that Condition~1 is true when $g=3$.
Every irreducible component $\Gamma$ of $\cA_3[\sigma_3]$ has dimension $2$.
The image of $\tau_3: \cM_3 \to \cA_3$ is open and dense in $\cA_3$.
So every irreducible component $\Gamma$ of $\cM_3[\sigma_3]$ has dimension $2$.
Also $\mathrm{codim}(\cA_2[\sigma_2], \cA_2) = 2$.
Thus, by varying $X$ in $\Gamma$, we can hope to find an unramified double cover $\pi:Y \to X$ 
whose Prym is supersingular.
\end{rationale}

Condition~1 also holds for $g=2$, because $\mathrm{dim}(\cM_2[\sigma_2])=1$ and 
$\mathrm{codim}(\cA_{1}[\sigma_{1}], \cA_{1})=1$.  
Given $p$, one can also ask about supersingular curves of genus $3$ that are
unramified double covers of genus $2$ curves.
Surprisingly, we are not aware of any results on this so we include one here.

\begin{proposition} \label{Punramsmall}
If $p \equiv 3 \bmod 4$, then over $\overline{\mathfont{F}}_p$ 
there exists an unramified double cover $\pi: Y \to X$ of a genus $2$ curve such that
$Y$ is a supersingular smooth curve of genus $3$.
\end{proposition}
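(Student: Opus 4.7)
The plan is to mimic the argument of Theorem~\ref{Tintro} one genus lower. Given coprime separable polynomials $f_1 \in k[x]$ of degree $4$ and $f_2 \in k[x]$ of degree $2$, I form the Klein-four cover $Y$ of $\mathbb{P}^1_x$ defined by $u^2 = f_1(x)$, $v^2 = f_2(x)$, together with its quotient $X : y^2 = f_1(x) f_2(x)$ under the diagonal involution $\sigma : (u,v) \mapsto (-u,-v)$. A Riemann--Hurwitz computation (both degrees are even, so there is no branching at infinity) yields $g(X) = 2$ and $g(Y) = 3$, and $\sigma$ is fixed-point-free by coprimality of $f_1, f_2$, so $\pi : Y \to X$ is an unramified double cover. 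The three intermediate double covers of $\mathbb{P}^1_x$ are $X$, the elliptic curve $E : u^2 = f_1(x)$, and the genus-$0$ curve $v^2 = f_2(x)$; by the Kani--Rosen theorem we get $\Jac(Y) \sim \Jac(X) \times E$, so the Prym of $\pi$ is $E$.

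Next, I fix $f_1(x) = x^4 + 1$, so $E$ has $j$-invariant $1728$, CM by $\mathbb{Z}[i]$, and is supersingular for every $p \equiv 3 \bmod 4$. With the Prym automatically supersingular, $Y$ is supersingular if and only if $X$ is. Writing $f_2(x) = x^2 + bx + c$ produces a 2-parameter family $\{X(b,c)\}$ of genus $2$ curves, and the problem becomes finding $(b,c) \in \overline{\mathbb{F}}_p^2$ with $f_1, f_2$ coprime and $X(b,c)$ supersingular (equivalently, $p$-rank zero, since for $g = 2$ these conditions coincide).

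To produce such a pair, I would specialize to the 1-parameter subfamily $f_2(x) = x^2 - t$. The resulting curve $X(t)$ acquires the extra involution $x \mapsto -x$, giving a bielliptic decomposition $\Jac(X(t)) \sim E_1'(t) \times E_2'(t)$ with $E_1'(t) : y^2 = (z^2 + 1)(z - t)$ and $E_2'(t) : w^2 = z(z^2 + 1)(z - t)$ (via $z = x^2$, $w = xy$). Thus $X(t)$ is supersingular if and only if both $E_i'(t)$ are, which via Example~\ref{example:elliptic curve} translates to the simultaneous vanishing in $t$ of two Hasse--Witt polynomials $H_p(\lambda_i(t))$ with $\lambda_1(t) = (1 + it)/2$ and $\lambda_2(t) = 2t/(t + i)$. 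The hard part will be exhibiting a common root $t \in \overline{\mathbb{F}}_p \setminus \{0, \pm i\}$ of these two polynomials. A promising avenue exploits the symmetry $t \leftrightarrow 1/t$, under which $\lambda_1(1/t) = 1/\lambda_2(t)$ and hence $E_1'(1/t) \cong E_2'(t)$; at the fixed points $t = \pm 1$ the two supersingularity conditions collapse into one, reducing the problem to showing that a specific Legendre curve is supersingular. If that collapsed condition fails for some $p \equiv 3 \bmod 4$, one returns to the full 2-parameter family and uses a leading-coefficient analysis analogous to Lemma~\ref{lemma:F(t)} to guarantee that the $p$-rank zero locus in the $(b,c)$-plane meets the complement of the discriminant locus.
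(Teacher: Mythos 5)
Your construction of the cover is sound and is essentially the paper's: the paper takes $X: y^2 = x(x^2-1)(x^2-\beta)$ and pulls back the double cover $E: y^2 = x(x^2-1) \to \PP^1$, so its Prym is likewise a $j=1728$ elliptic curve, supersingular exactly when $p \equiv 3 \bmod 4$; your Klein-four formulation with $f_1 = x^4+1$ is the same idea in different coordinates. The genuine gap is the step you yourself flag as ``the hard part'': producing parameters for which the genus $2$ curve $X$ is supersingular. That is the entire content of the proposition, and neither of your proposed routes closes it. First, the collapse at $t = \pm 1$ reduces to the supersingularity of $y^2=(z^2+1)(z-1)$, whose Legendre parameter is $(1+i)/2$ and whose $j$-invariant is $128$; this is not a CM $j$-invariant, so that curve is supersingular only for a density-zero set of primes and the collapsed condition fails for almost all $p \equiv 3 \bmod 4$. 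Second, the fallback to the full $(b,c)$-family asks for a point of a codimension-$2$ locus (supersingular $=$ $p$-rank $0$ when $g=2$) inside a $2$-dimensional affine family; no purity or dimension count guarantees such a point exists, and this is exactly the difficulty that forces the delicate coefficient analysis of Lemma~\ref{lemma:F(t)} and Proposition~\ref{prop:nontrivialsolution} in the genus-$3$ case. On your $1$-parameter slice $f_2 = x^2 - t$ the situation is worse: you need a common root of two polynomials of degree roughly $(p-1)/2$ in a single variable, and the symmetry $t \leftrightarrow 1/t$ only says the two root sets are exchanged by inversion, not that they intersect. In the paper's genus-$3$ argument the analogous reduction succeeds because one of the two conditions vanishes identically on the chosen line $t_1 = -t_2$ (as $-1$ is always a root of $H_p$ for $p \equiv 3 \bmod 4$); nothing of the sort happens on your slice.

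The paper sidesteps all of this by working with the family $y^2 = x(x^2-1)(x^2-\beta)$, for which Ibukiyama, Katsura, and Oort \cite{iko} already proved that approximately $p/4$ values of $\beta$ yield a supersingular curve; the proof is then a two-line citation. To salvage your version you would need either to invoke that result after renormalizing your family, or to supply an honest analogue of Proposition~\ref{prop:nontrivialsolution} for your two conditions.
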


\begin{proof}
If $p \equiv 3 \bmod 4$, for $\beta \in k -\{0,1\}$, consider $X: y^2=x(x^2-1)(x^2-\beta)$ and 
the projection map $h_1: X \to \mathfont{P}^1$ onto the $x$-axis.
Let $E: y^2=x(x^2-1)$, with $h_2: E \to \mathfont{P}^1$ being the projection onto the $x$-axis.
Then the pullback of $h_2$ by $h_1$ is an unramified double cover $\pi:Y \to X$.

Also $Y \to \mathfont{P}^1$ is a Klein-four cover whose third intermediate quotient has genus $0$.
By \cite[Theorem~B]{kanirosen}, $\Jac(Y) \sim \Jac(X) \times \Jac(E)$.
If $p \equiv 3 \bmod 4$, then $E$ is supersingular (and independent of $\beta$).
By \cite[Propositions~1.9, 1.14]{iko}, there are (approximately $p/4$) choices of $\beta$ such that $X$ is supersingular.
Thus $Y$ is a smooth supersingular curve of genus $3$ for those choices of $\beta$, and $\pi:Y \to X$ satisfies the conditions in the 
statement.
\end{proof}

\subsection{Families of singular supersingular curves} \label{ss:singularfamilies}

Unfortunately, there are many \emph{singular} curves $X$ of genus $3$ having an unramified double cover $\pi:Y \to X$ 
such that $Y$ is supersingular.  
We explain how to construct positive dimensional families of these.
Geometrically, these families demonstrate that the subspace $\cM_3[\sigma_3]$ 
in Rationale~\ref{R1} does not intersect the boundary of $\cM_3$ in a dimensionally transverse way.

Let $\cM_g^{ct}$ denote the moduli space of stable curves of genus $g$ of compact type,
and let $\cM_g^{ct}[\sigma_g]$ denote its supersingular locus.  The Torelli morphism extends to $\tau_g: \cM_g^{ct} \to \cA_g$.
We first construct a family of supersingular singular curves of genus $3$, 
whose moduli points are in $\cM_3^{ct}[\sigma_3]$.

\begin{notation} \label{Nsingularcomponent}
Let $(E, \mathcal{O}_E)$ be a supersingular curve of genus $1$ with a marked point.
Let $(W, \eta)$ be a supersingular curve of genus $2$ with one marked point.
Let $X_s$ be the curve obtained by clutching $W$ and $E$ together at their marked points.
Then $X_s$ is a stable curve of compact type which has genus $3$ and is supersingular.
Also $\Jac(X_s) \cong \Jac(E) \times \Jac(W)$ by \cite[Ex.\ 9.2.8]{BLR}.

There are two dimensional families of such curves $X_s$, 
because there is a one-dimensional choice for $W$, and a one-dimensional choice for the point $\eta \in W$.
Note that $\Jac(W)$ does not depend on the choice of $\eta$.  
For all but the smallest primes, there is more than one choice for $E$, and 
for the one-dimensional family of curves $W$ \cite[Theorem in Section~4.9]{LiOort}.

More precisely, 
in the notation of \cite{knudsen2}, 
there is a clutching morphism $\kappa: \cM_{1;1} \times \cM_{2;1} \to \cM_3^{ct}$.
We restrict $\kappa$ to the supersingular locus $\kappa_{ss} : \cM_{1;1}[\sigma_1] \times \cM_{2;1}[\sigma_2] \to \cM_3^{ct}[\sigma_3]$.
Let $S$ be an irreducible component of the image of $\kappa_{ss}$.  
Then $\mathrm{dim}(S) = 2$ and $\mathrm{dim}(\tau_3(S))=1$.
\end{notation}

Given a singular genus $3$ supersingular curve $X_s$ as in Notation~\ref{Nsingularcomponent}, 
we now construct unramified double covers of $X_s$ that are supersingular (and still singular).

\begin{notation} \label{Nsingcomp}
Let $S$ be an irreducible component of the image of $\kappa_{ss}$ as in Notation~\ref{Nsingularcomponent}.
For $s \in S$, consider an unramified double cover $\pi_s: Y_s \to X_s$ as follows.
\begin{description}
\item[Case 1] Suppose the restriction of $\pi_s$ over $W$ 
is disconnected.  Then $Y_s$ is a singular curve of compact type, having 
three irreducible components $W', W'', E'$, where $W' \cong W'' \cong W$ and
where $E' \to E$ is an unramified double cover.  
So $Y_s$ is supersingular.
The Prym of $\pi_s$ is isogenous to $\mathrm{Jac}(W)$, thus varies with $W$.
Unfortunately, this provides a 1-dimension family of such covers; the number of these covers defined over $\mathfont{F}_q$ grows with $q=p^a$.

\item[Case 2] Suppose the restriction of $\pi_s$ over $E$ 
is disconnected.  Then $Y_s$ is a singular curve of compact type, 
having three irreducible components $\tilde{W}, E', E''$, where $E' \cong E'' \cong E$ and where $\tilde{W} \to W$ is an unramified 
double cover. The curve $Y_s$ is supersingular if and only if the Prym $P_2$ of $\tilde{W} \to W$ is supersingular.
In certain cases, e.g.\ when $p \equiv 3 \bmod 4$ using Proposition~\ref{Punramsmall}, we know that $P_2$  
is supersingular for some double cover $\tilde{W} \to W$.
Then the Prym of $\pi_s$ is isogenous to $P_2 \times E$ and thus is supersingular (and superspecial).
\end{description}
\end{notation}

We would like to understand which of the curves $X_s$ from Notation~\ref{Nsingularcomponent}
are in the closure of $\cM_3[\sigma_3]$ in $\cM_3^{ct}$, 
and which of the unramified double covers $\pi_s: Y_s \to X_s$ from Notation~\ref{Nsingcomp}
are in the closure of the supersingular locus of the moduli space of unramified double covers of smooth curves of genus $3$.  
This would help in resolving Conjecture~\ref{conjecture} for $p \equiv 1 \bmod 4$.

\subsection{Second Rationale}

\begin{notation} \label{NPrymvariety}
Let $\cR_g$ denote the moduli space of unramified double covers $\pi:Y \to X$ where $X$ is a smooth curve of genus $g$.
The forgetful morphism $\cR_g \to \cM_g$ takes the isomorphism class of $\pi$ to the isomorphism class of $X$; it
is finite and unramified.
The Prym morphism $\rho_g: \cR_g \to \cA_{g-1}$ takes the isomorphism class of $\pi$
to the isomorphism class of the Prym of $\pi$.
\end{notation}

Here is the second rationale for Conjecture~\ref{conjecture}.

\begin{rationale} \label{R2}
Every irreducible component $\Xi$ of $\cA_2[\sigma_2]$ has dimension $1$.
If $P$ is a p.p.\ abelian surface, consider
the fiber of the Prym map $\rho_3: \cR_3 \to \cA_2$ over the moduli point for $P$.
By \cite[Corollary~4.1]{Verra}, this fiber contains one component of dimension $3$, 
whose points represent quartic plane curves $X$ that have an unramified double cover 
$\pi:Y \to X$ whose Prym is $P$. 
Let $R$ be the pre-image $\rho_3^{-1}(\Xi)$.
Since $\cR_3 \to \cM_3$ is finite and unramified,
the dimension of the image of $R$ in $\cM_3$ is $4$.
In addition, $\mathrm{codim}(\cM_3[\sigma_3], \cM_3) = 4$.
Thus, by varying the moduli point of $P$ in $\Xi$, and the moduli point in the fiber of $\rho_3$ above it, 
we might expect to find (a finite number of) supersingular curves $X$, 
having an unramified double cover $\pi: Y \to X$ such that $Y$ is supersingular.
\end{rationale}

\begin{rationale} \label{Rheuristic}
We present additional information about Rationale~\ref{R2} using intersection theory in the tautological ring of $\cA_3$.
We thank Jeremy Feusi and Renzo Cavalieri for explaining some of these ideas to us.
For $g \geq 1$, let $\mathfont{E}_g\to \cA_g$ denote the Hodge bundle, which is the cotangent bundle of the zero-section of the universal
p.p.\ abelian variety of dimension $g$;
if $X$ is a curve of genus $g$, the sections of $\mathbb{E}_g$ over $\Jac(X)$ are the holomorphic $1$-forms on $X$.  
Consider the Chern classes $\lambda_i$ of $\mathfont{E}_g$ for $1 \leq i \leq g$.

The supersingular locus $\mathcal{A}_2[\sigma_2]$ has cycle class 
$\gamma_1 = f_1(p) \lambda_2$, where $f_1(p)=(p-1)(p^2-1)$ \cite[Example~12.2]{EVdG}.
Consider the pre-image $\rho_3^{-1}(\mathcal{A}_2[\sigma_2])$ in $\cR_3$.
Let $\gamma_1'$ be its image under the morphism $\mathcal{R}_3 \to \mathcal{M}_3 \to \mathcal{A}_3$, which takes 
$[\pi: Y \to X] \mapsto [X] \mapsto [\mathfont{Jac}(X)]$.
Using the fact that $\cR_3 \to \cM_3$ is finite and unramified of degree $63$, one can show that
$\gamma_1'$ has cycle class $63 f_1(p) \lambda_2$ in the tautological ring of $\cA_3$.

The supersingular locus $\mathcal{A}_3[\sigma_3]$ has cycle class 
$\gamma_2 = f_2(p) \lambda_1\lambda_3$, where $f_2=(p-1)^2(p^3-1)(p^4-1)$ \cite[Theorem~8.1]{VdGHara}.  
The intersection of $\gamma'_1$ and $\gamma_2$ is $N_p:=63f_1(p)f_2(p)\mathfont{deg}(\lambda_1 \lambda_2 \lambda_3)$.
By the Hirzebruch--Mumford proportionality theorem, 
$\mathfont{deg}(\lambda_1 \lambda_2 \lambda_3) = (1/8) \zeta(-1)\zeta(-3)\zeta(-5)$, where $\zeta(z)$ is the Riemann--zeta function.
So $N_p=f_1(p)f_2(p)/(2^{10} \cdot 3^2 \cdot 5)$, which has rate of growth $O(p^{12})$. 
This is promising, but inconclusive because of the excess intersection discussed in Notation~\ref{Nsingcomp}.
\end{rationale}

\subsection{A variation of the conjecture}

Let $X$ be a smooth curve of genus $g$.
Suppose $\pi': Y' \to X$ is a double cover branched at exactly two points.
Then $Y'$ is a smooth curve of genus $2g$ and
$\Jac(Y')$ is isogenous to $\Jac(X) \times P'$, where the Prym $P'$ of $\pi'$ is a p.p.\ abelian variety of dimension $g$.
So $Y'$ is supersingular if and only if both $X$ and $P'$ are supersingular.
Let $\cM_{g;2}$ denote the moduli space of smooth genus $g$ curves with two marked points.
Consider the following:
\begin{equation} \label{Econdition1}
\text{Condition~2: } \mathrm{dim}(\cM_{g;2}[\sigma_g]) \geq \mathrm{codim}(\cA_{g}[\sigma_{g}], \cA_{g}).
\end{equation}
When Condition~2 holds, 
searching for a supersingular curve of genus $2g$ 
which is a double cover $\pi:Y' \to X$ of a smooth curve $X$ of genus $g$ is more likely to be successful
because of \cite[Theorem~4.1]{JO00}.
Similarly, we see that Condition~2 cannot be true for many $g$.

Condition~2 is satisfied when $g=2$ because $\mathrm{dim}(\cM_{2;2}[\sigma_2])=3$ and 
$\mathrm{codim}(\cA_{2}[\sigma_{2}], \cA_{2})=2$.
In fact, for all odd $p$, the supersingular curves of genus $4$ found in \cite{khs20} 
are double covers of curves of genus $g=2$.
Here is an example where these supersingular curves can be written down easily.

\begin{example}
If $p \equiv 5 \bmod 6$, then over $\overline{\mathfont{F}}_p$, there is a genus $2$ curve $X$
with a double cover $\pi': Y' \to X$ branched at two points such that $Y'$ is a smooth supersingular curve of genus $4$.
\end{example}

\begin{proof}
For $\alpha \in k -\{0,1\}$, consider 
the smooth genus $2$ curve $X: y^2 = (x^3-1)(x^3 - \alpha)$ and the projection map $h_1: X \to \mathfont{P}^1$ onto the $x$-axis.
Let $E: y^2=x^3-1$ with $h_2: E \to \mathfont{P}^1$ being the projection onto the $x$-axis.
The pullback of $h_2$ by $h_1$ is a double cover $\pi':Y' \to X$ whose branch locus consists of the two pre-images of $\infty$ in $X$.

Also $Y' \to \mathfont{P}^1$ is a Klein-four cover whose third intermediate quotient is $E':y^2 = x^3-\alpha$.
By \cite[Theorem~B]{kanirosen}, $\Jac(Y') \sim \Jac(X) \times \Jac(E) \times \Jac(E')$.

If $p \equiv 5 \bmod 6$, then $E$ and $E'$ are supersingular elliptic curves (for any $\alpha$).
By \cite[Propositions~1.8, 1.14]{iko}, there are (approximately $p/3$) choices of $\alpha$ such that $X$ is supersingular. 
Thus $Y'$ is a smooth supersingular curve of genus $4$ for those choices of $\alpha$, 
and $\pi:Y' \to X$ satisfies the conditions in the statement.
\end{proof}

Condition~2 is also holds when $g=3$ because $\mathrm{dim}(\cM_{3;2}[\sigma_3])=4$ and 
$\mathrm{codim}(\cA_{3}[\sigma_{3}], \cA_{3})=4$.
Thus we include the following conjecture.

\begin{conj} \label{conjecture2}
For any odd prime $p$, 
there exists a smooth curve $X'$ of genus $3$ over $\overline{\FF}_p$
with a double cover $\pi' : Y' \to X'$ branched at two points such that
$Y'$ is a supersingular curve of genus $6$.
\end{conj}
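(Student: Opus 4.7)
The plan is to mirror the two-pronged strategy used for Conjecture~\ref{conjecture}: a theoretical construction via special subvarieties of $\cA_6$ for primes $p \equiv 3 \bmod 4$, paralleling Section~\ref{Section3}, and a computational search for primes $p \equiv 1 \bmod 4$, paralleling Section~\ref{sec:computational}.

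For the theoretical half, I would seek a low-dimensional family $\mathcal{F}$ of smooth genus-$6$ curves $Y'$, each arising as an abelian cover of $\PP^1$ in the spirit of Moonen's classification \cite{moonen}, such that the image of $\mathcal{F}$ in $\cA_6$ lies in a special subvariety whose basic Newton polygon is supersingular, and such that each $Y'$ admits a distinguished double cover onto a smooth genus-$3$ curve $X'$ branched at exactly two points. A natural starting candidate is to look for an abelian cover $Y' \to \PP^1$ of genus $6$ that factors as $Y' \to X' \to \PP^1$ through the $M[8]$ family $X' = C_1(t_1,t_2)$, with the first map $\pi'$ ramified only above the two points of $X'$ sitting over $\{t_1, t_2\}$, which together form the unique $\Z/4\Z$-invariant effective divisor of degree $2$ on $C_1$. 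Given such a family, one applies the Tamagawa formalism of Section~\ref{Section2} (extended to abelian covers if the total group is non-cyclic) to decompose $H^1(Y', \cO_{Y'})$ into Frobenius-eigenspaces, then uses Proposition~\ref{proposition:frobenius formula} to write the Frobenius action on each piece as an explicit polynomial in $(t_1, t_2)$. Imposing the vanishing of the resulting Hasse--Witt polynomials yields conditions analogous to $b_p$ and $H_p$, and the final step, paralleling Proposition~\ref{prop:nontrivialsolution}, is to exhibit a common solution $(t_1, t_2)$ at which both $X'$ and $Y'$ remain smooth, via degree and leading-term analysis of the defining polynomials.

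For the computational half when $p \equiv 1 \bmod 4$, I would enumerate supersingular p.p.\ abelian threefolds $P'$ over $\FF_p$ from the LMFDB \cite{lmfdb} and, for each, search through parameter choices for smooth quartic plane curves $X'$ admitting a double cover $\pi' : Y' \to X'$ branched at two points with Prym $P'$. A convenient parametrization fixes a smooth $X'$, varies the branch divisor $D = p_1 + p_2$ on $X'$, and varies the square root of $\cO(D)$ in $\Pic(X')$ (a finite torsor under $\Jac(X')[2]$, of size $64$); one then checks via Magma whether the resulting $Y'$ is supersingular. The main obstacle I anticipate, parallel to Section~\ref{ss:singularfamilies}, is controlling singular supersingular boundary examples: clutching a smooth supersingular genus-$3$ curve to a supersingular elliptic or genus-$2$ curve at one or two points yields positive-dimensional families of singular supersingular $Y'$ that could entirely account for the expected zero-dimensional intersection inside $\cM_{3;2}^{ct}$, so the conjecture ultimately depends on exhibiting a single smooth example, for which the Shimura-variety construction above would be decisive. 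A secondary difficulty, absent from Theorem~\ref{Tintro}, is that the Prym now has dimension $3$, so its supersingularity is cut out by several polynomial conditions of total codimension $4$ rather than by a single Hasse--Witt polynomial, making the final transversality analysis combinatorially more involved and requiring several independent polynomial computations to complete.
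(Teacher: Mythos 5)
The statement you are addressing is presented in the paper as an \emph{open conjecture}; the paper offers no proof of Conjecture~\ref{conjecture2}, only the dimension-count rationale that $\mathrm{dim}(\cM_{3;2}[\sigma_3])=4$ equals $\mathrm{codim}(\cA_3[\sigma_3],\cA_3)=4$, together with the worked genus-$2$ analogue ($X: y^2=(x^3-1)(x^3-\alpha)$ for $p\equiv 5\bmod 6$). So there is no proof in the paper to compare yours against. Read as a proof attempt, your proposal has a decisive gap: no family is actually constructed, no Hasse--Witt polynomials are computed, and no common zero is exhibited, so nothing is established for any prime $p$. It is a research program consistent with the paper's methods, not a proof.

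The specific obstruction you would have to overcome --- and which your own closing paragraph identifies without resolving --- is the codimension of the supersingular condition on the Prym. In Theorem~\ref{Tintro} the Prym is an abelian surface that splits up to $2$-power isogeny as $E_1\times E_2$ with $E_2$ automatically supersingular for $p\equiv 3\bmod 4$, so everything reduces to two polynomial conditions ($b_p=0$ and $H_p=0$) on a two-parameter family, and the genuinely hard step is Proposition~\ref{prop:nontrivialsolution}, exhibiting a common root away from the degenerate locus. For a double cover of $C_1(t_1,t_2)$ branched at two points the Prym is three-dimensional, and its supersingular locus has codimension $4$ in $\cA_3$; imposing this on top of $b_p(t_1,t_2)=0$ inside a two-parameter family is hopeless unless the composite $Y'\to\PP^1$ is an abelian cover whose signature forces the basic Newton polygon to be supersingular, the mechanism behind Proposition~\ref{cor:possible prank}. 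You do not verify that any such cover exists: your candidate branch divisor must first be shown to yield an abelian (not merely solvable) cover of $\PP^1$, and even then the Kottwitz--Moonen analysis of the resulting signature could produce a basic polygon that is not supersingular. (A smaller slip: $[P_{t_1}]+[P_{t_2}]$ is not the \emph{unique} $\Z/4\Z$-invariant effective divisor of degree $2$ on $C_1$; the fibre over $0$ is another.) The computational half is likewise only a sketch --- the parametrization of branched double covers with prescribed three-dimensional Prym is substantially harder than the Kummer-surface construction of Section~\ref{sec:computational}, which relies on results of Beauville and Verra specific to the unramified genus-$3$ case --- and no data is produced. The conjecture remains open.
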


\bibliographystyle{amsalpha}
\bibliography{supersingular5}

\newcommand{\etalchar}[1]{$^{#1}$}
\def\cprime{$'$}
\providecommand{\bysame}{\leavevmode\hbox to3em{\hrulefill}\thinspace}
\providecommand{\MR}{\relax\ifhmode\unskip\space\fi MR }
\providecommand{\MRhref}[2]{%
  \href{http://www.ams.org/mathscinet-getitem?mr=#1}{#2}
}
\providecommand{\href}[2]{#2}
\begin{thebibliography}{vdGvdV95}

\bibitem[Aok08]{Aoki}
Noboru Aoki, \emph{On the zeta function of some cyclic quotients of {F}ermat
  curves}, Comment. Math. Univ. St. Pauli \textbf{57} (2008), no.~2, 163--185.
  \MR{2492579}

\bibitem[BCFS23]{magma}
Wieb Bosma, John~J. Cannon, Claus Fieker, and Allan Steel (eds.),
  \emph{Handbook of {M}agma functions, version 2.28-2}, 2023.

\bibitem[Bea89]{Beauville}
Arnaud Beauville, \emph{Prym varieties: a survey}, Theta functions---{B}owdoin
  1987, {P}art 1 ({B}runswick, {ME}, 1987), Proc. Sympos. Pure Math., vol. 49,
  Part 1, Amer. Math. Soc., Providence, RI, 1989, pp.~607--620. \MR{1013156}

\bibitem[BLR90]{BLR}
Siegfried Bosch, Werner L{\"u}tkebohmert, and Michel Raynaud, \emph{N\'eron
  models}, Ergebnisse der Mathematik und ihrer Grenzgebiete (3) [Results in
  Mathematics and Related Areas (3)], vol.~21, Springer-Verlag, Berlin, 1990.
  \MR{1045822 (91i:14034)}

\bibitem[BP]{BPrimsproc}
Jeremy Booher and Rachel Pries, \emph{Supersingular curves via the
  {S}himura--{T}aniyama method}, https://arxiv.org/abs/2503.04878.

\bibitem[BP25]{github}
\bysame, \emph{\textsf{supersingularcovers}}, 2025, online GitHub repository,
  created May, 2025.

\bibitem[Bru08]{Bruin}
Nils Bruin, \emph{The arithmetic of {P}rym varieties in genus 3}, Compos. Math.
  \textbf{144} (2008), no.~2, 317--338. \MR{2406115}

\bibitem[CEG{\etalchar{+}}18]{WINEprym}
Turku~Ozlum Celik, Yara Elias, Bur\c{c}in G\"{u}ne\c{s}, Rachel Newton, Ekin
  Ozman, Rachel Pries, and Lara Thomas, \emph{Non-ordinary curves with a {P}rym
  variety of low {$p$}-rank}, Women in numbers {E}urope {II}, Assoc. Women
  Math. Ser., vol.~11, Springer, Cham, 2018, pp.~117--158. \MR{3882709}

\bibitem[CF96]{CasselsFlynn}
J.~W.~S. Cassels and E.~V. Flynn, \emph{Prolegomena to a middlebrow arithmetic
  of curves of genus {$2$}}, London Mathematical Society Lecture Note Series,
  vol. 230, Cambridge University Press, Cambridge, 1996. \MR{1406090}

\bibitem[Deu41]{deuring}
Max Deuring, \emph{Die {T}ypen der {M}ultiplikatorenringe elliptischer
  {F}unktionenk\"{o}rper}, Abh. Math. Sem. Hansischen Univ. \textbf{14} (1941),
  197--272. \MR{5125}

\bibitem[dJO00]{JO00}
A.~J. de~Jong and F.~Oort, \emph{Purity of the stratification by {N}ewton
  polygons}, J. Amer. Math. Soc. \textbf{13} (2000), no.~1, 209--241.
  \MR{1703336}

\bibitem[Eke87]{Ekedahl87}
Torsten Ekedahl, \emph{On supersingular curves and abelian varieties}, Math.
  Scand. \textbf{60} (1987), no.~2, 151--178. \MR{914332}

\bibitem[Elk11]{Elkin}
Arsen Elkin, \emph{The rank of the {C}artier operator on cyclic covers of the
  projective line}, J. Algebra \textbf{327} (2011), 1--12. \MR{2746026}

\bibitem[EvdG09]{EVdG}
Torsten Ekedahl and Gerard van~der Geer, \emph{Cycle classes of the {E}-{O}
  stratification on the moduli of abelian varieties}, Algebra, arithmetic, and
  geometry: in honor of {Y}u. {I}. {M}anin. {V}ol. {I}, Progr. Math., vol. 269,
  Birkh\"auser Boston, Boston, MA, 2009, pp.~567--636. \MR{2641181}

\bibitem[FvdG04]{FVdG}
Carel Faber and Gerard van~der Geer, \emph{Complete subvarieties of moduli
  spaces and the {P}rym map}, J. Reine Angew. Math. \textbf{573} (2004),
  117--137. \MR{2084584}

\bibitem[Ibu93]{ibukiyama93}
Tomoyoshi Ibukiyama, \emph{On rational points of curves of genus {$3$} over
  finite fields}, Tohoku Math. J. (2) \textbf{45} (1993), no.~3, 311--329.
  \MR{1231559}

\bibitem[IKO86]{iko}
Tomoyoshi Ibukiyama, Toshiyuki Katsura, and Frans Oort, \emph{Supersingular
  curves of genus two and class numbers}, Compositio Math. \textbf{57} (1986),
  no.~2, pp.\ 127--152.

\bibitem[KHS20]{khs20}
Momonari Kudo, Shushi Harashita, and Hayato Senda, \emph{The existence of
  supersingular curves of genus 4 in arbitrary characteristic}, Res. Number
  Theory \textbf{6} (2020), no.~4, Paper No. 44, 17. \MR{4170348}

\bibitem[Knu83]{knudsen2}
Finn~F. Knudsen, \emph{The projectivity of the moduli space of stable curves.
  {II}. {T}he stacks {$M_{g,n}$}}, Math. Scand. \textbf{52} (1983), no.~2,
  161--199. \MR{702953}

\bibitem[KR89]{kanirosen}
E.~Kani and M.~Rosen, \emph{Idempotent relations and factors of {J}acobians},
  Math. Ann. \textbf{284} (1989), no.~2, 307--327. \MR{1000113}

\bibitem[{LMF}25]{lmfdb}
The {LMFDB Collaboration}, \emph{The {L}-functions and modular forms database},
  \url{https://www.lmfdb.org}, 2025, [Online; accessed 31 March 2025].

\bibitem[LMPT19a]{LMPT2}
Wanlin Li, Elena Mantovan, Rachel Pries, and Yunqing Tang, \emph{Newton
  polygons arising from special families of cyclic covers of the projective
  line}, Res. Number Theory \textbf{5} (2019), no.~1, Paper No. 12, 31.
  \MR{3897613}

\bibitem[LMPT19b]{LMPT1}
\bysame, \emph{Newton polygons of cyclic covers of the projective line branched
  at three points}, Research directions in number theory---{W}omen in {N}umbers
  {IV}, Assoc. Women Math. Ser., vol.~19, Springer, Cham, [2019] \copyright
  2019, pp.~115--132. \MR{4069381}

\bibitem[LO98]{LiOort}
Ke-Zheng Li and Frans Oort, \emph{Moduli of supersingular abelian varieties},
  Lecture Notes in Mathematics, vol. 1680, Springer-Verlag, Berlin, 1998.
  \MR{1611305}

\bibitem[Moo10]{moonen}
Ben Moonen, \emph{Special subvarieties arising from families of cyclic covers
  of the projective line}, Doc. Math. \textbf{15} (2010), 793--819. \MR{2735989
  (2012a:14071)}

\bibitem[Oor74]{oortsub}
Frans Oort, \emph{Subvarieties of moduli spaces}, Invent. Math. \textbf{24}
  (1974), pp.\ 95--119.

\bibitem[Oor91]{oorthypsup}
\bysame, \emph{Hyperelliptic supersingular curves}, Arithmetic algebraic
  geometry ({T}exel, 1989), Progr. Math., vol.~89, Birkh\"{a}user Boston,
  Boston, MA, 1991, pp.~247--284. \MR{1085262}

\bibitem[Pri24]{P:somecasesOort}
Rachel Pries, \emph{Some cases of {O}ort's conjecture about {N}ewton polygons},
  Nagoya Mathematical Journal (2024), 1--11.

\bibitem[Re08]{re2008}
Riccardo Re, \emph{Supersingularity from group actions}, Math. Nachr.
  \textbf{281} (2008), no.~4, 575--581. \MR{2404300}

\bibitem[Re09]{re2009}
\bysame, \emph{Supersingular quotients of {F}ermat curves}, Finite Fields Appl.
  \textbf{15} (2009), no.~4, 450--467. \MR{2535589}

\bibitem[Ser83]{serre82}
Jean-Pierre Serre, \emph{Nombres de points des courbes alg\'{e}briques sur
  {${\bf F}\sb{q}$}}, Seminar on number theory, 1982--1983 ({T}alence,
  1982/1983), Univ. Bordeaux I, Talence, 1983, pp.~Exp. No. 22, 8. \MR{750323}

\bibitem[Sil09]{silvermanaec}
Joseph~H. Silverman, \emph{The arithmetic of elliptic curves}, second ed.,
  Graduate Texts in Mathematics, vol. 106, Springer, Dordrecht, 2009.
  \MR{2514094}

\bibitem[Tam03]{tamagawa03}
Akio Tamagawa, \emph{On the tame fundamental groups of curves over
  algebraically closed fields of characteristic {$>0$}}, Galois groups and
  fundamental groups, Math. Sci. Res. Inst. Publ., vol.~41, Cambridge Univ.
  Press, Cambridge, 2003, pp.~47--105. \MR{2012213}

\bibitem[Tat66]{tate:endo}
John Tate, \emph{Endomorphisms of abelian varieties over finite fields},
  Invent. Math. \textbf{2} (1966), 134--144. \MR{0206004 (34 \#5829)}

\bibitem[vdGH]{VdGHara}
Gerald van~der Geer and Shushi Harashita, \emph{The cycle class of the
  supersingular locus of principally polarized abelian varieties},
  https://arxiv.org/abs/2307.14393.

\bibitem[vdGvdV95]{vdgvdv}
Gerard van~der Geer and Marcel van~der Vlugt, \emph{On the existence of
  supersingular curves of given genus}, J. Reine Angew. Math. \textbf{458}
  (1995), 53--61. \MR{1310953}

\bibitem[Ver87]{Verra}
Alessandro Verra, \emph{The fibre of the {P}rym map in genus three}, Math. Ann.
  \textbf{276} (1987), no.~3, 433--448. \MR{875339}

\bibitem[Yui80]{Yui}
Noriko Yui, \emph{On the {J}acobian variety of the {F}ermat curve}, J. Algebra
  \textbf{65} (1980), no.~1, 1--35. \MR{578793}

\end{thebibliography}

\end{document}